\definecolor{hot}{RGB}{65,105,225}
\newcommand{\CN}{\mathbb{C}^{n}}
\newcommand{\CC}{\mathbb{C}}
\newcommand{\VV}{\mathbf{V}}
\newcommand{\bfx}{\mathbf{x}}
\newcommand{\bfu}{\mathbf{u}}
\newcommand{\bfp}{\mathbf{p}}
\newcommand{\bfy}{\mathbf{y}}
\newcommand{\bfz}{\mathbf{z}}
\theoremstyle{plain}
\newtheorem{theorem}{Theorem}[section]
\newtheorem{proposition}[theorem]{Proposition}
\newtheorem{lm}[theorem]{Lemma}
\newtheorem{corollary}[theorem]{Corollary}
\newtheorem{lemma}[theorem]{Lemma}
\newtheorem{thrm}[theorem]{Theorem}
\theoremstyle{definition}
\newtheorem{remark}[theorem]{Remark}
\newtheorem{ex}[theorem]{Example}
\newtheorem*{ex*}{Example}
\def\be{\begin{equation}}
\def\ee{\end{equation}}
\def\bt{\begin{thrm}}
\def\et{\end{thrm}}
\def\bc{\begin{cor}}
\def\ec{\end{cor}}
\def\br{\begin{rmk}}
\def\er{\end{rmk}}
\def\bp{\begin{prop}}
\def\ep{\end{prop}}
\def\bl{\begin{lm}}
\def\el{\end{lm}}
\def\bex{\begin{ex}}
\def\eex{\end{ex}}
\def\bd{\begin{defn}}
\def\ed{\end{defn}}
\def\bp{\mathbf{p}}
\newcommand\hh{{\mathfrak{h}}}
\DeclareMathOperator{\reg}{reg}                  % reg
\DeclareMathOperator{\id}{id}                    % id
\DeclareMathOperator{\MLdeg}{MLdeg}
\DeclareMathOperator{\LOdeg}{LOdeg}
\def\bC{\mathbb{C}}
\def\bP{\mathbb{P}}
\def\cH{\mathcal{H}}
\def\cO{\mathcal{O}}
\def\lra{\longrightarrow}
\title[Linear optimization on varieties and Chern-Mather classes]{Linear optimization on varieties and \\Chern-Mather classes}
\author{Laurentiu G. Maxim}
\address{Department of Mathematics,         University of Wisconsin-Madison,  480 Lincoln Drive, Madison WI 53706-1388, USA.}
\email {maxim@math.wisc.edu}\urladdr{https://www.math.wisc.edu/~maxim/}
\author{Jose Israel Rodriguez}
\address{Department of Mathematics,         University of Wisconsin-Madison,  480 Lincoln Drive, Madison WI 53706-1388, USA.}
\email {jose@math.wisc.edu}\urladdr{https://sites.google.com/wisc.edu/jose/}
\author{Botong Wang}
\address{Department of Mathematics,         University of Wisconsin-Madison,  480 Lincoln Drive, Madison WI 53706-1388, USA.}
\email {wang@math.wisc.edu}\urladdr{http://www.math.wisc.edu/~wang/}
\author{Lei Wu}
\address{Department of Mathematics, KU Leuven, Celestijnenlaan 200B B-3001 Leuven, Belgium}
\email {lei.wu@kuleuven.be}\urladdr{https://sites.google.com/view/leiwuswebsite/}
\keywords{Linear optimization degree, local Euler obstruction, Chern-Mather classes,  conormal varieties, Segre classes, polar degrees}
\subjclass[2020]{14B05, 14C17, 57R20, 90C26}
\begin{document}

\date{\today}

\maketitle

\begin{abstract}  
The linear optimization degree gives an algebraic measure of complexity of optimizing a linear objective function over an algebraic model. 
Geometrically, it can be interpreted as the degree of a projection map on the {affine} conormal variety. 
Fixing an affine variety, our first result shows that the geometry of {this} conormal variety, expressed in terms of bidegrees, completely determines the Chern-Mather classes of the given variety. We also show that these bidegrees coincide with the linear optimization degrees of generic affine sections. 
\end{abstract}

%%%%%%%%%%%%%%%%%%%%%%%%%%%%%%%

\section{Introduction}\label{intro}
For a complex projective variety $X\subset \bP^n$, the \emph{maximum likelihood (ML) degree} of $X$, denoted by $\MLdeg(X)$, is defined to be the number of critical points of a general likelihood function ${p_0^{u_0}\cdots p_n^{u_n}}/{(p_0+\cdots +p_n)^{u_0+\cdots +u_n}}$, with $u_i\in \mathbb{Z}$, on the smooth locus of $X\setminus \mathcal{H}$, where $\mathcal{H}$ is the union of all coordinate hyperplanes and the hyperplane {given by} $p_0+\cdots +p_n=0$. 
When $X\setminus \cH$ is smooth, $\MLdeg(X)$ is equal, up to a sign, to the Euler characteristic of $X\setminus \cH$ (see \cite{Huh}). When $X\setminus \cH$ is singular, $\MLdeg(X)$ is equal to the Euler characteristic of MacPherson's local Euler obstruction function $Eu_{X\setminus \cH}$ (see \cite{RW} and  \cite{MRWW}). Noting that the Euler characteristic is the degree of the total Chern class, the above results can be extended to relations between the ML bidegrees and MacPherson's Chern and Chern-Mather  classes.
 Moreover, using a Chern class/Euler characteristic involution formula of Aluffi, relations between ML bidegrees and sectional ML degrees are established  in \cite{HS} and \cite{MRWW}. In particular, in their recent paper \cite{MRWW}, the authors proved the Huh-Sturmfels \emph{involution conjecture} of \cite{HS}.

In this paper, we aim to find a linear analogue of the above-mentioned results. Given an {\it affine} variety $X\subset \bC^n$, we define its {\it linear optimization (LO) degree}, denoted by $\LOdeg(X)$, to be the number of critical points of a general linear function restricted to the smooth locus $X_{\textrm{reg}}$ of $X$. 
This gives an algebraic measure to the complexity of optimizing a linear function over algebraic models $X_{\reg} \cap \mathbb{R}^n$,
which are prevalent in algebraic statistics and applied algebraic geometry. 
Similar to the ML degrees, we can also define LO bidegrees $b_i(X)$ and sectional LO degrees $s_i(X)$, as we will discuss below. 
Our first result (Theorem \ref{thm_main}) is to relate the LO bidegrees $b_i(X)$ with the Chern-Mather class of $X$. 
Furthermore, it is the case that $s_i(X)\leq b_i(X)$, see Section~\ref{sec:applied}, 
and our second result (Theorem \ref{thm_bs}) states that the equality always holds. 

An equivalent definition of the linear optimization degree $\LOdeg(X)$ of an affine variety $X\subset \bC^n$ can be given as follows.   Let $T^*_X\bC^n$ be  the {affine} conormal variety of $X$, i.e., the closure of the conormal bundle $T^*_{X_{\textrm{reg}}}\bC^n$ of $X_{\textrm{reg}}$ in $T^*\bC^n$. Consider the trivialization $T^*\bC^n\cong \bC^n\times \bC^n$ of the cotangent bundle, where the first factor is the base and the second is the fiber. Then the projection of $T^*_X\bC^n$ to the second factor $\bC^n$ is a generically finite map, and its degree is equal to $\LOdeg(X)$.

We define the LO bidegrees of $X$ to be the bidegrees of $T^*_X\bC^n$. More precisely, consider the standard compactification $\bC^n\times \bC^n\subset \bP^n\times \bP^n$, 
and let $\overline{T^*_X\bC^n}$ be the closure of $T^*_X\bC^n$ in $\bP^n\times \bP^n$. We define the {\it LO bidegrees} of $X$, denoted by $b_i(X)$ or simply $b_i$, to be the coefficients of the Chow class of $\overline{T^*_X\bC^n}$, that is,
\be\label{bi}
[\overline{T^*_X\bC^n}]=b_0 [\bP^0\times \bP^n]+b_1[\bP^{1}\times \bP^{n-1}]+\cdots +b_d[\bP^{d}\times \bP^{n-d}]\in A_*(\bP^n\times \bP^n)
\ee
where $d=\dim X$. In particular, $b_0(X)=\LOdeg(X)$.

Fixing the standard compactification $\bC^n\subset \bP^n$, we consider the local Euler obstruction function $Eu_X$ of the affine variety $X \subset \bC^n$ as a constructible function on $\bP^n$, with value 0 outside of $X$. Applying to it  the Chern-MacPherson transformation $c_*:F(\bP^n) \to A_*(\bP^n)$, with $F(\bP^n)$ the group of constructible functions on $\bP^n$, we get a class
\be\label{cma}
c^{Ma}(X):=c_*(Eu_X)=a_0[\bP^0]+a_1 [\bP^1]+\cdots+a_d [\bP^d]\in A_*(\bP^n),
\ee
which we refer to as 
the \emph{total Chern-Mather class of $X$}. To emphasize the space $X$ we work with,  we will occasionally use the notation $a_i(X)$ for the coefficients $a_i$ of \eqref{cma}.

For notational convenience, in \eqref{bi} and \eqref{cma} we set $a_j=b_j=0$ if $j\notin \{0, 1, \ldots, d\}$.

Our first result describes the relation between the LO bidegrees and the total Chern-Mather class of $X$ as follows.
\begin{theorem}\label{thm_main}
For any $d$-dimensional irreducible affine variety $X\subset \bC^n$, the sequences $\{a_i\}$ and $\{b_i\}$ defined as in \eqref{bi} and \eqref{cma} satisfy the identity
\be\label{id1}
\sum_{0\leq i\leq d}b_i t^{n-i}= \sum_{0\leq i\leq d}a_i (-1)^{d-i} t^{n-i}(1+t)^{i}.
\ee
\end{theorem}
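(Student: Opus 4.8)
The plan is to match the generating-function identity \eqref{id1} with the classical relation between polar degrees and Chern--Mather classes. \textbf{Step 1: reformulation.} Expanding $(1+t)^i=\sum_k\binom{i}{k}t^k$ on the right-hand side of \eqref{id1} and comparing the coefficients of $t^{n-j}$ shows that \eqref{id1} is equivalent to the triangular system
\[
b_j=\sum_{i=j}^{d}(-1)^{d-i}\binom{i}{j}\,a_i,\qquad 0\le j\le d,
\]
and, since the matrix $\big((-1)^{d-i}\binom{i}{j}\big)$ is invertible over $\mathbb{Z}$, equivalently to the inverse relation $a_i=\sum_{j\ge i}(-1)^{d-j}\binom{j}{i}\,b_j$. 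So it suffices to establish the displayed identity for each fixed $j$.

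\textbf{Step 2: the bidegrees $b_j$ as polar degrees.} Writing $h_1,h_2$ for the hyperplane classes of the two factors of $\bP^n\times\bP^n$, the definition \eqref{bi} gives $b_j=\deg\big([\overline{T^*_X\bC^n}]\cdot h_1^{j}h_2^{n-j}\big)$, which is exactly the standard enumerative recipe for the $j$-th polar degree of $X$: the degree of the closure of the $j$-th polar locus of $X_{\reg}$ with respect to generic linear data. The delicate point is that $\overline{T^*_X\bC^n}$ compactifies the conormal variety of the \emph{open} $X_{\reg}\subset\bC^n$, so one must check that the generic linear subspaces used to extract $b_j$ meet the part of $\overline{T^*_X\bC^n}$ lying over $\bP^n\times\bP^n\setminus(\bC^n\times\bC^n)$ properly and contribute nothing spurious; irreducibility of $\overline{T^*_X\bC^n}$ and the fact that it has the expected dimension $n$ are used here.

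\textbf{Step 3: polar degrees versus Chern--Mather classes.} Up to the sign $(-1)^{\dim X}$, the affine conormal variety $T^*_X\bC^n$ is the characteristic cycle of the local Euler obstruction $Eu_X$, so $c^{Ma}(X)=c_*(Eu_X)$ is determined by it; running the Chern--MacPherson transformation through the compactified conormal variety yields a Piene-type formula relating polar degrees to Chern--Mather classes, which in the present normalization is exactly $b_j=\sum_{i\ge j}(-1)^{d-i}\binom{i}{j}a_i$. The binomial weight is $\binom{i}{j}$, rather than the $\binom{i+1}{j+1}$ of projective duality, precisely because the cotangent fibre $\bC^n_u$ is here compactified by adjoining the coordinate $u_0$ instead of being projectivized: over $X_{\reg}$ the conormal variety is the fibrewise linear closure $\bigcup_{x\in X_{\reg}}\overline{(T_xX)^\perp}$, a $\bP^{n-d}$-bundle over $X_{\reg}$ whose Segre class reproduces $c(TX)$ on the smooth locus, while passing to the closure supplies precisely the corrections over $X_{\sing}$ and over infinity that $c_*$ records on the other side. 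Combining this with Step 1 proves \eqref{id1}.

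\textbf{Main obstacle.} The crux is Step 2 and its interface with Step 3: everything has to be translated between the non-standard compactification $\bC^n\times\bC^n\subset\bP^n\times\bP^n$ and the intrinsic polar and Chern--Mather data of the affine variety $X$, all while keeping control of what happens over the boundary. Organizing the argument through the characteristic-cycle and Chern--MacPherson formalism makes the correction terms automatic; one then only has to check the resulting $\mathbb{Z}$-linear identity on a spanning family of constructible functions --- for instance on indicator functions of smooth subvarieties --- for which the conormal space is an honest vector bundle and the computation reduces to a Segre-class/projective-bundle calculation.
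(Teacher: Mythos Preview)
Your Step~1 reformulation is correct, but Steps~2--3 have a genuine gap. You want to identify $b_j$ with a polar degree and then invoke a Piene/Sabbah-type formula, but the classical relations of that type are proved for the projectivized conormal $\bP(T^*_X\bC^n)$ or for the projective conormal $N_{\overline X}\subset\bP^n\times(\bP^n)^\vee$, not for the particular compactification $\overline{T^*_X\bC^n}\subset\bP^n\times\bP^n$ used to define the $b_j$. In fact the paper shows (Proposition~\ref{prop_iff}) that $b_i(X)=\delta_{i+1}(\overline X)$ holds \emph{only} when $H_\infty\notin\overline X^{\vee}$, and gives explicit examples where the two sequences differ. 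So the ``Piene-type formula'' you invoke is simply not available in general, and your argument as written would at best prove Theorem~\ref{thm_main} under that extra transversality-at-infinity hypothesis --- which is precisely the case already covered by the classical literature the paper cites.

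Your fallback of linearizing in $\varphi$ and checking on smooth $X$ does not rescue this: even when $X\subset\bC^n$ is smooth, $\overline X\subset\bP^n$ can be singular and $H_\infty$ can lie in $\overline X^{\vee}$, so the conormal bundle does not extend to a projective bundle over $\overline X$ and the ``Segre-class/projective-bundle calculation'' you allude to still has to confront the boundary. The paper's proof handles this by a completely different mechanism: it shows (Proposition~\ref{prop_trivial}) that the twist $\Omega^1_{\bP^n}(\log H_\infty)(H_\infty)$ is a \emph{trivial} bundle, so that $\overline{T^*_X\bC^n}\subset\bP^n\times\bP^n$ is literally the closure of $\Lambda=T^*_X\bC^n$ in that twisted bundle. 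A general twisting formula for shadows (Corollary~\ref{cor_TBD}) then relates the $b_i$ to the shadow of the closure of $\Lambda$ in the untwisted logarithmic bundle $\Omega^1_{\bP^n}(\log H_\infty)$, and the latter is computed by the authors' earlier result (Theorem~\ref{thm_m}, equation~\eqref{8}) as the signed Chern--Mather class of $X$. That last ingredient --- that the log-closure shadow already equals the Chern--Mather class without any hypothesis at infinity --- is the substantive input your outline is missing.
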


Let us state two immediate consequences of Theorem \ref{thm_main}, which were also considered by other authors by different methods.

First, the equality of top degree coefficients in \eqref{id1} reproves the following result of Seade-Tib\u{a}r-Verjovsky \cite[Equation (2)]{STV} (see also \cite[Theorem 1.2]{ST} and \cite[Theorem 3.10]{MRW}).

\begin{corollary}\label{cor_linear}
For any $d$-dimensional irreducible affine variety $X\subset \bC^n$, and $H\subset \bC^n$ a general affine hyperplane, we have
\begin{equation}\label{eq_b0}
\LOdeg(X)=b_0(X)=(-1)^{d} \cdot \chi(Eu_{X}|_{\bC^n\setminus H}).
\end{equation}
\end{corollary}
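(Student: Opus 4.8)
The plan is to derive Corollary~\ref{cor_linear} from Theorem~\ref{thm_main} by comparing the coefficients of the top power $t^n$ on the two sides of \eqref{id1}. On the left that coefficient is $b_0$; on the right, the $i$-th summand $a_i(-1)^{d-i}t^{n-i}(1+t)^i$ has leading term $a_i(-1)^{d-i}t^n$, so the $t^n$-coefficient there is $\sum_{0\le i\le d}(-1)^{d-i}a_i$. Hence
\[
\LOdeg(X)=b_0=\sum_{0\le i\le d}(-1)^{d-i}a_i=(-1)^d\sum_{0\le i\le d}(-1)^i a_i,
\]
and it remains only to identify $\sum_{0\le i\le d}(-1)^i a_i$ with $\chi(Eu_X|_{\bC^n\setminus H})$.

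For this last identity I would use the following fact about Chern--MacPherson classes: if $\varphi\in F(\bP^n)$ has $c_*(\varphi)=\sum_i\gamma_i[\bP^i]$ and $L\subset\bP^n$ is a general hyperplane, then
\[
\chi\!\left(\varphi|_{\bP^n\setminus L}\right)=\sum_i(-1)^i\gamma_i=\deg\!\left((1+h)^{-1}\cap c_*(\varphi)\right),
\]
where $h$ is the hyperplane class of $\bP^n$; the second equality is the elementary observation that in $A_*(\bP^n)$ only the terms $h^{n-i}\cdot h^{m}$ with $m=i$ contribute to the degree. Both $\varphi\mapsto\chi(\varphi|_{\bP^n\setminus L})$ and $\varphi\mapsto\sum_i(-1)^i\gamma_i$ are additive in $\varphi$ (for the first, a single general $L$ serves the finitely many strata of $\varphi$; for the second, $c_*$ is a homomorphism), so it suffices to check the identity for $\varphi=\bone_Z$ with $Z\subset\bP^n$ an irreducible subvariety, where it amounts to the behaviour of Chern--Schwartz--MacPherson classes under generic hyperplane sections (Aluffi) — the affine analogue of the Euler-characteristic relations already exploited for ML degrees in \cite{HS,MRWW}.

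To conclude I would apply this with $\varphi=Eu_X$, a constructible function on $\bP^n$ that vanishes identically off $\bC^n$ and has $c_*(Eu_X)=\sum_i a_i[\bP^i]$ by \eqref{cma}, and with $L=\overline{H}$ the closure in $\bP^n$ of a general affine hyperplane $H\subset\bC^n$ (such an $\overline{H}$ is a general hyperplane of $\bP^n$, since the closures of affine hyperplanes form a dense open subset of the dual projective space). Writing $\bP^n\setminus\overline{H}=(\bC^n\setminus H)\sqcup(H_\infty\setminus\overline{H})$ with $H_\infty=\bP^n\setminus\bC^n$, and using that $Eu_X$ vanishes on $H_\infty$, we obtain $\chi(Eu_X|_{\bP^n\setminus\overline{H}})=\chi(Eu_X|_{\bC^n\setminus H})$, whence $\sum_i(-1)^i a_i=\chi(Eu_X|_{\bC^n\setminus H})$; combined with the first paragraph this yields $\LOdeg(X)=b_0=(-1)^d\chi(Eu_X|_{\bC^n\setminus H})$. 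I expect the only genuine (and minor) obstacle to be the displayed identity for $\chi(\varphi|_{\bP^n\setminus L})$ — itself elementary, being the linear shadow of Aluffi's involution formula — everything else reducing to coefficient bookkeeping in \eqref{id1} together with the observation that $Eu_X$ is supported inside $\bC^n$.
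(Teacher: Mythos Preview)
Your proposal is correct and follows essentially the same approach as the paper: both extract the $t^n$-coefficient from \eqref{id1} to obtain $b_0=(-1)^d\sum_i(-1)^ia_i$, and then identify $\sum_i(-1)^ia_i$ with $\chi(Eu_X|_{\bC^n\setminus H})$ via Aluffi's formula $c_*(Eu_X|_H)=\frac{\hh}{1+\hh}\,c_*(Eu_X)$ (equivalently, $\deg\!\big((1+\hh)^{-1}\cap c_*(Eu_X)\big)$). The only cosmetic difference is that the paper writes $\chi(Eu_X|_{\bC^n\setminus H})=\chi(Eu_X)-\chi(Eu_X|_H)$ directly, whereas you pass through $\bP^n\setminus\overline{H}$ and use that $Eu_X$ vanishes on $H_\infty$; these are equivalent bookkeeping choices.
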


Secondly, by plugging $t=-1$ in \eqref{id1}, we derive the following relation between the value of the local Euler obstruction function of an affine cone at the cone point, and the LO bidegrees of the affine cone. More precisely, in the notations of \eqref{bi}, we get the following result (compare also with~\cite[Corollaire 5.1.2]{polaires}).
\begin{corollary}\label{cor_cone}
Assume that the $d$-dimensional irreducible affine variety $X\subset \bC^n$ 
is an affine cone of a projective variety, and denote its cone point by $O$. Then
\be\label{euc}
Eu_X(O)=b_d(X)-b_{d-1}(X)+\cdots +(-1)^db_0(X).
\ee
\end{corollary}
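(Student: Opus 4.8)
The plan is to obtain \eqref{euc} from the polynomial identity \eqref{id1} of Theorem~\ref{thm_main} by the specialization $t=-1$ suggested in the remark, after which the only remaining content is the identification $Eu_X(O)=a_0(X)$ in the conical case. Setting $t=-1$ in \eqref{id1}, every term on the right-hand side with $i\geq 1$ acquires a factor $(1+t)^i$ and vanishes, so only the $i=0$ term survives and one gets $\sum_{0\leq i\leq d}(-1)^{n-i}b_i=(-1)^{d+n}a_0$. Cancelling $(-1)^n$ and multiplying by $(-1)^d$ yields
\[
a_0(X)=\sum_{0\leq i\leq d}(-1)^{d-i}b_i(X)=b_d(X)-b_{d-1}(X)+\cdots+(-1)^d b_0(X),
\]
so \eqref{euc} is equivalent to the claim $Eu_X(O)=a_0(X)$, which is where the hypothesis that $X$ is a cone will be used.

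To prove $Eu_X(O)=a_0(X)$, I would first read off $a_0(X)$ from \eqref{cma}: since $a_0[\bP^0]$ is the $0$-dimensional component of the cycle $c_*(Eu_X)\in A_*(\bP^n)$ and $\bP^n$ is complete, pushing forward to a point and invoking the normalization of the Chern--MacPherson transformation (it commutes with proper pushforward, and on a point it sends a constructible function to its weighted Euler characteristic) gives $a_0(X)=\int_{\bP^n}c_*(Eu_X)=\chi(\bP^n,Eu_X)$, the topological Euler characteristic of $\bP^n$ weighted by $Eu_X$. Because $Eu_X$ is supported on the affine variety $X\subset\bC^n\subset\bP^n$, and because ordinary and compactly supported Euler characteristics agree for complex algebraic varieties, the weighted Euler characteristic is additive along the decomposition $X=(X\setminus\{O\})\sqcup\{O\}$, so
\[
a_0(X)=\chi\bigl(X\setminus\{O\},\,Eu_X|_{X\setminus\{O\}}\bigr)+Eu_X(O).
\]

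It then remains to show that the first summand vanishes. The scaling action of $\bC^*$ on $\bC^n$ preserves $X$ (this is exactly the conical hypothesis) and acts by algebraic, hence analytic, automorphisms; since the local Euler obstruction is an invariant of the analytic isomorphism type of a germ, $Eu_X$ is constant along $\bC^*$-orbits. Hence $X\setminus\{O\}$ can be stratified by $\bC^*$-invariant locally closed subsets on which $Eu_X$ is constant, and on each such stratum the induced $\bC^*$-action is fixed-point-free because $O\notin X\setminus\{O\}$; a complex algebraic variety with a fixed-point-free $\bC^*$-action has vanishing Euler characteristic (by the localization equality $\chi(S)=\chi(S^{\bC^*})=\chi(\emptyset)=0$, or, equivalently, because the geometric quotient map $(X\setminus\{O\})\to\bP(X)$ is a Zariski-locally trivial $\bC^*$-bundle by Hilbert~90 and $\chi$ is multiplicative in such bundles). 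Summing the contributions gives $\chi(X\setminus\{O\},Eu_X|_{X\setminus\{O\}})=0$, hence $a_0(X)=Eu_X(O)$, and combining with the first display proves \eqref{euc}.

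The manipulations in the first paragraph are purely formal; the substantive—though mild—obstacle is the last step, namely organizing the level sets of $Eu_X$ into $\bC^*$-invariant strata and applying the $\bC^*$-localization (or fibration multiplicativity) for Euler characteristics, together with the clean bookkeeping of ordinary versus compactly supported Euler characteristics needed to make the additivity rigorous.
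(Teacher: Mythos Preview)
Your proof is correct and follows essentially the same route as the paper: specialize \eqref{id1} at $t=-1$ to obtain $a_0=\sum_i(-1)^{d-i}b_i$, then use the $\bC^*$-invariance of $Eu_X$ on the cone to identify $a_0=\chi(Eu_X)$ with $Eu_X(O)$. The paper states this $\bC^*$-localization step more tersely, while you spell out the stratification and fixed-point-free argument, but the content is the same.
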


\medskip

By analogy with the sectional maximum likelihood degrees, we now introduce sectional LO degrees of affine varieties as follows. For any $0\leq i\leq d$, we define the {\it $i$-th sectional  LO degree} of $X$, denoted by $s_i(X)$ or simply $s_i$, to be 
\be\label{si} s_i(X):= \LOdeg(X\cap H_1\cap \cdots \cap H_i),\ee
where $H_1, \ldots, H_i$ are generic affine hyperplanes. Then $s_0(X)=\LOdeg(X)$, and $s_d(X)$ is the degree of $X$. Here, for notational convenience, we also set $s_i=0$ for $i>d$. 

Our next result shows that the LO bidegrees and sectional LO degrees coincide.
\begin{theorem}\label{thm_bs}
Let $X\subset \bC^n$ be any irreducible affine variety, and let $b_i$ and $s_i$ be its LO bidegrees and LO sectional degrees, respectively. Then $s_i=b_i$ for all $i$. 
\end{theorem}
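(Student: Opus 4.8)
The plan is to reduce the statement to Theorem \ref{thm_main} by showing that both sides of the desired equality $s_i = b_i$ satisfy the same recursion under hyperplane sectioning, and then checking a base case. The key geometric input is a comparison between the conormal variety of a generic hyperplane section $X \cap H$ and the conormal variety of $X$ itself. Concretely, if $X \subset \bC^n$ and $H = \{\ell = c\}$ is a generic affine hyperplane (so that $H \cap X_{\textrm{reg}}$ is smooth of dimension $d-1$ and transverse), then at a smooth point $x \in X \cap H$ the conormal space $N^*_{x}(X\cap H)$ is the sum of $N^*_x X$ and the line spanned by $d\ell$. Passing to closures and to the bidegree computation in $\bP^n \times \bP^n$, this says that $\overline{T^*_{X\cap H}\bC^n}$ is obtained from $\overline{T^*_X\bC^n}$ by a ``join with a fixed point in the fiber $\bP^n$'' type operation, whose effect on the Chow class \eqref{bi} is the classical one: it shifts the bidegree sequence, i.e. $b_i(X \cap H) = b_{i}(X) \ +$ (a correction) — I would pin down the exact combinatorial effect by intersecting $[\overline{T^*_X\bC^n}]$ with the class of the relevant generic linear subspace and tracking how $[\bP^a \times \bP^b]$ classes transform. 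The upshot I expect is a clean relation of the form $b_i(X \cap H) = b_i(X) + b_{i-1}(X)$ — or its inverse — compatible with the fact that $b_0(X \cap H) = s_1(X)$.

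Granting such a sectioning formula for the $b_i$, the argument proceeds by induction on $d = \dim X$. For the inductive step, apply the formula repeatedly: $s_i(X) = \LOdeg(X \cap H_1 \cap \cdots \cap H_i) = b_0(X \cap H_1 \cap \cdots \cap H_i)$ by Corollary \ref{cor_linear} (i.e. $s_0 = b_0$ for every variety), and then unwind the $i$-fold application of the sectioning relation to express $b_0(X\cap H_1 \cap\cdots\cap H_i)$ as an alternating or binomial combination of the $b_j(X)$; comparison with the same combination obtained from $b_i(X)$ by the formula gives $s_i(X) = b_i(X)$. Alternatively — and this may be the cleaner route — combine the sectioning formula for $b_i$ with the known sectioning behavior of Chern–Mather classes under generic hyperplane sections (the ``Chern–Mather class commutes with generic hyperplane section up to a shift'' statement, which follows from the characterization of $c^{Ma}$ via $Eu$ and the behavior of $Eu$ under transverse sections), and feed both into the identity \eqref{id1} of Theorem \ref{thm_main}. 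Since \eqref{id1} is an invertible linear change of variables between $\{a_i\}$ and $\{b_i\}$, matching the section recursions on the $a$-side and the $b$-side forces $s_i = b_i$ once the base case $s_d = \deg X = b_d$ is checked (the latter because a $0$-dimensional conormal computation, or the leading coefficient in \eqref{bi}, gives $b_d(X) = \deg X$).

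The base case $s_d(X) = b_d(X) = \deg X$ is immediate: cutting $X$ with $d$ generic hyperplanes yields $\deg X$ reduced points, each contributing $1$ to $\LOdeg$, while $b_d$ is by definition the coefficient of $[\bP^d \times \bP^{n-d}]$ in \eqref{bi}, which equals the degree of the base variety $X$ since the conormal variety dominates $X$ with fibers of dimension $n - d$. One should also handle the mild subtlety that intersecting with a generic \emph{affine} hyperplane (as opposed to a hyperplane through the origin or a projective one) is exactly what keeps $b_0$ of the section equal to $s_1$ rather than to some sectional degree of the projectivization; genericity of the $H_j$ guarantees transversality to $X_{\textrm{reg}}$, to its singular strata, and to the part of $\overline{T^*_X\bC^n}$ at infinity, so that no correction terms appear in the section formula beyond the expected combinatorial shift.

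The main obstacle I anticipate is establishing the precise sectioning formula for the LO bidegrees $b_i$ under a generic affine hyperplane section — that is, proving that $\overline{T^*_{X\cap H}\bC^n}$ has exactly the predicted Chow class with no extra contributions from the boundary $(\bP^n \times \bP^n) \setminus (\bC^n \times \bC^n)$ or from the singular locus of $X$. This requires a careful transversality/Bertini argument for the conormal variety (controlling both the ``base at infinity'' and the ``fiber at infinity'' components), together with the identification, at the level of cycles, of the conormal variety of a transverse section with the appropriate incidence correspondence inside $\overline{T^*_X\bC^n} \times \bP^n$. Once that geometric lemma is in hand, everything else is linear algebra in $A_*(\bP^n \times \bP^n)$ and an appeal to Theorem \ref{thm_main}.
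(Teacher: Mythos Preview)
Your approach differs from the paper's and has a concrete gap. The sectioning formula you need is $b_i(X\cap H) = b_{i+1}(X)$, a pure shift, not $b_i(X) + b_{i-1}(X)$ as you guess. With the correct formula in hand the theorem follows immediately by iteration, $s_i(X) = b_0(X\cap H_1\cap\cdots\cap H_i) = b_1(X\cap H_1\cap\cdots\cap H_{i-1}) = \cdots = b_i(X)$, and neither Theorem~\ref{thm_main} nor any induction on the $a_j$ is needed. But you do not prove this formula: your conormal-of-a-section description is correct and does lead to it, yet the genericity step --- showing that $(H\cap L_1)\times (L_2+\bC\cdot d\ell)$ meets $\overline{T^*_X\bC^n}$ in exactly $b_{i+1}(X)$ points with no contribution from infinity --- is essentially the whole content, and you only flag it as an obstacle. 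The paper in fact derives the closely related statement (Corollary~\ref{cor:generic}) \emph{from} Theorem~\ref{thm_bs}, not as an input to it.

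The paper avoids this issue by working entirely on the Chern--Mather side. It writes $s_i$ as a difference of Euler characteristics via Corollary~\ref{cor_linear}, then uses Aluffi's sectioning formula $\chi(Eu_{X\cap H_1\cap\cdots\cap H_k}) = \int_{\bP^n}\big(\tfrac{\hh}{1+\hh}\big)^k c_*(Eu_X)$ to express the generating function $\sum_i s_i t^{n-i}$ directly in terms of the $a_j$. A short manipulation gives exactly the right-hand side of \eqref{id1}, hence $\sum_i b_i t^{n-i}$ by Theorem~\ref{thm_main}. No sectioning of the $b_i$ is ever invoked. Your ``alternative route'' is in this spirit, but you still list the $b_i$-sectioning lemma as an ingredient; in fact Aluffi's formula on the $a$-side together with Theorem~\ref{thm_main} already suffices, so that ingredient is both unproven and unnecessary.
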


Our formula in Theorem \ref{thm_main} shows that the Chern-Mather class of the affine variety $X$ is determined by the LO bidegrees. The relationship is more involved than the corresponding result for ML bidegrees (\cite[Theorem 1.3]{MRWW}) because, while the logarithmic cotangent bundle of the pair $(\bP^n, \bP^n\setminus (\bC^*)^n)$ is trivial,  the one of $(\bP^n, \bP^n\setminus \bC^n)$ is not. (See Proposition \ref{prop_trivial} for a remedy of this issue.) By contrast, Theorem \ref{thm_bs} shows that there is a simple relation between the LO bidegrees and the sectional LO bidegrees, unlike the ML degree situation where the relationship is given by an involution formula (see \cite[Theorem 1.5]{MRWW}). 
In particular our result gives, via~\eqref{eq_b0} and \eqref{si}, a topological interpretation of all LO bidegrees as
Euler characteristics, that is,
\[
b_i(X)=(-1)^{d-i}\chi(Eu_{X\cap H_1\cap \cdots \cap H_i} |_{\bC^n\setminus H_{i+1}}),
\]
with $d=\dim X$. 
The equality between LO bidegrees and the sectional LO degrees also shows that, when computing the LO bidegrees, orthogonal subspaces are sufficiently general (see Corollary~\ref{cor:generic}). 

\medskip

In Section \ref{sec_compare}, we discuss the relation between the LO bidegrees of an affine variety and the polar degrees of its projective closure (see Proposition \ref{prop_iff}). As a consequence, we generalize Theorem~13 of \cite{wasserstein} to singular varieties (see Corollary \ref{cor_si}). 
In view of formula \eqref{euc}, this relation also allows us to express the value of local Euler obstruction of an affine cone at the cone point in terms of the projective polar degrees of the projective variety we are coning off (compare with \cite[Proposition~3.17]{Aluffi18}).

\begin{remark}
We believe our results here motivate further analogous investigations for other objective functions like Euclidean distance~\cite{MR3451425}, $p$-norms~\cite{kubjas2021algebraic} and bottlenecks~\cite{bottleneck-degree}.
Moreover, Proposition~\ref{prop:LM} encourages a revisitation into the maximum likelihood estimation case~\cite{Huh,MR2230921, MR2189544} to find an involution~ at the level of critical points. 
\end{remark}

\subsection{Comparison with other works}
Let us clarify here the difference between our approach and some of the more classical works. 

As above, let $X \subset \bC^n$ be an irreducible affine variety with conormal space $T^*_X\bC^n \subset T^*\bC^n$. Instead of taking the fiberwise projectivization $C(X,\bC^n):=\bP(T^*_X\bC^n) \subset \bP(T^*\bC^n)$ as in, e.g., Sabbah \cite{Sab}, we first compactify the fibers of $T^* \bC^n$ by taking their projective closures, 
i.e., $T^*\bC^n=\bC^n\times \bC^n\subset \bC^n\times \bP^n$, 
so that we keep track of conic subvarieties contained in the zero section of $T^*\bC^n$, and then we compactify $\bC^n\times \bP^n$ using the trivial projective bundle $\bC^n\times \bP^n\subset \bP^n\times \bP^n$. Other authors, like Aluffi \cite{Aluffi18} or Parusinski-Pragacz \cite{PP01}, consider the projective closure $\overline{X} \subset \bP^n$ of $X$, together with its corresponding projective conormal variety $C(\overline{X},\bP^n):=\bP(T^*_{\overline{X}}\bP^n) \subset \bP(T^*\bP^n)$. 

Note that Sabbah's formula \cite[Lemme 1.2.1]{Sab} applied to $X \subset \bC^n$ computes the Chern-Mater class of $X$ in the Borel-Moore homology of $X$. The same formula applied to $\overline{X} \subset \bP^n$ computes the Chern-Mather class of $\overline{X}$ in the Borel-Moore homology (or Chow group) of $\overline{X}$, and resp., of  $\bP^n$, upon using the proper pushforward. By contrast, 
we relate our compactification of $T^*\bC^n$ in $\bP^n\times \bP^n$ to a twisted logarithmic cotangent bundle of $\bP^n$, and compute the Chern-Mather class of $X$ in $A_*(\bP^n)$ via Ginsburg's microlocal interpretation of Chern-MacPherson classes (cf. \cite{Gin}).
In fact,  we derive Theorem~\ref{thm_main} as a consequence of our main result from \cite[Theorem 1.1]{MRWW}, recalled below in Theorem~\ref{thm_m},
which computes the Chern classes of the extension by zero to 
$\mathbb{P}^n$ of the local Euler obstruction function $Eu_X$ of the affine variety $X\subset \bC^n$.

This kind of relation between conormal varieties, Chern classes, and polar varieties, has been already considered by \cite{Sab}, \cite{polaires},  \cite{Aluffi18},  etc.
For example, when $X$ is the affine cone on a projective variety,
or more generally, if the projective closure  $\overline{X}$ of $X$ is  transversal to the hyperplane at infinity $H_\infty$ of $\mathbb{P}^n$,
 Theorem~\ref{thm_main} can be derived from a combination of results contained in \cite{Sab} and \cite{Aluffi18}. This is the case when $H_\infty$ is not contained in the dual variety of $\overline{X}$, see Section~\ref{sec_compare} for more results in this direction. 
 
% Motivated by applications where this 
 The novel contribution of Theorem~\ref{thm_main} (and of its consequence in Theorem \ref{thm_bs})
is that it applies to all affine varieties without any additional assumption of infinity.  
For example, in %our study of the Euclidean distance degree of the multiview variety 
\cite{MRW} we prove a conjecture from \cite{MR3451425} by applying formula \eqref{eq_b0} to the computation of the Euclidean distance degree of the multiview variety, which does not have good behavior along infinity.

\medskip

{\bf Acknowledgements.} 
The authors thank Bernd Sturmfels for inspiring comments on an earlier version of the paper. 
Maxim  is  partially  supported  by  the  Simons  Foundation  (Collaboration  Grant  \#567077),  and  by  the  Romanian  Ministry  of  National  Education (CNCS-UEFISCDI grant PN-III-P4-ID-PCE-2020-0029).  Rodriguez is partially supported by the Office of the Vice Chancellor for Research and Graduate Education at UW-Madison with funding from the Wisconsin Alumni Research Foundation.
 Wang is partially supported by a Sloan fellowship.  Wu is supported by an FWO postdoctoral fellowship.

%%%%%%%%%%%%%%%%%%%%%
%%%%%%%%%%%%%%%%%

\section{Characteristic cycles. Chern classes. Microlocal interpretation}

In this paper, we work in the complex algebraic context, with $A_*$ denoting the Chow group. By convention, we use subscripts for characteristic classes valued in Chow groups, and we use superscripts whenever a characteristic class is of cohomological nature (e.g., Chern classes of a vector bundle).

Let $X$ be a smooth complex algebraic variety, and denote by $F(X)$ the group of algebraically constructible functions on $X$, i.e., the free abelian group generated by indicator functions $1_Z$ of closed irreducible subvarieties $Z$ of $X$. An important example of a constructible function on $X$ is the MacPherson {\it local Euler obstruction} function $Eu_Z$ of an irreducible subvariety $Z$ of $X$, see  \cite{MP0}. 

Let $L(X)$ be the free abelian group spanned by the irreducible conic Lagrangian cycles in the cotangent bundle $T^*X$. Recall that irreducible conic Lagrangian cycles in $T^*X$ correspond to the conormal spaces $T^*_ZX$, for $Z$ a closed irreducible subvariety of $X$.  
Here, for such a closed irreducible subvariety  $Z$ of $X$ with smooth locus $Z_{\reg}$, its conormal variety $T^*_{Z}X$ is defined as the closure in $T^*X$ of
\[
T^*_{Z_{\reg}}X:=\{
(z,\xi)\in T^*X \mid z \in Z_{\reg}, \ \xi \in T^*_zX, \ \xi\vert_{T_zZ_{\rm reg}}=0 \}.
\]

The characteristic cycle functor $CC$ establishes a group isomorphism
$$CC:F(X) \lra L(X),$$ 
which, for a closed irreducible subvariety $Z$ of $X$, satisfies:
\be\label{cc} CC(Eu_Z)=(-1)^{\dim Z}\cdot T^*_Z X.\ee

In \cite{MP0}, MacPherson extended the notion of Chern classes to singular complex algebraic varieties by defining a natural transformation
$$c_*:F(-) \lra A_*(-)$$
from the functor $F(-)$ of constructible functions (with proper morphisms) to Chow (or Borel-Moore) homology, such that if $X$ is a smooth variety then $c_*(1_X)=c^*(TX) \cap [X]$. Here,  $c^*(TX)$ denotes the total cohomology Chern class of the tangent bundle $TX$, and $[X]$ is the fundamental class of $X$.
For any 
locally closed irreducible subvariety
$Z$ of a complex algebraic variety $X$, the function $1_Z$ is constructible on $X$, and the class 
$$c_*^{SM}(Z):=c_*(1_Z) \in A_*(X)$$
is usually referred to as the {\it Chern-Schwartz-MacPherson (CSM) class} of $Z$ in $X$. Similarly, 
the class $$c_*^{Ma}(Z):=c_*(Eu_Z) \in A_*(X)$$ is called the {\it Chern-Mather class} of $Z$, where we regard the local Euler obstruction function $Eu_Z$ as a constructible function on $X$ by setting the value zero on $X \setminus Z$.

Results of Ginsburg \cite{Gin} and Sabbah \cite{Sab} provided a microlocal interpretation of Chern classes, by showing that McPherson's Chern class transformation $c_*$ factors through the group of conic Lagrangian cycles in the cotangent bundle. We recall this construction below, following, e.g., \cite{AMSS}.

Let $E$ be a rank $r$ vector bundle on the smooth complex algebraic variety $X$. Let $\overline{E}\coloneqq \bP(E\oplus \mathbf{1})$ be the projective bundle, which is a fiberwise compactification of $E$ (with $\mathbf{1}$ denoting the trivial line bundle on $X$). Then $E$ may be identified with the open complement of $\bP(E)$ in $\overline{E}$. Let $\pi:E\to X$ and $\bar{\pi}:\overline{E} \to X$ be 
the projections, and let $\xi:=c^1(\mathcal{O}_{\overline{E}}(1))$
 be the first Chern class of the hyperplane line bundle on $\overline{E}$.
Pullback via ${\bar \pi}$ realizes $A_*(\overline{E})$ as a $A_*(X)$-module. 
An irreducible conic $d_C$-dimensional subvariety $C \subset E$
determines a ${d_C}$-dimensional cycle $\overline{C}$ in $\overline{E}$ and, by \cite[Theorem 3.3]{Ful}, one can express $[\overline{C}] \in A_{d_C}(\overline{E})$ uniquely as:
\begin{equation}\label{sh}
    [\overline{C}]=\sum_{j={d_C}-r}^{d_C} \xi^{j-{d_C}+r} \cap {\bar \pi}^* c^E_j(C),
\end{equation}
for some $c^E_j(C) \in A_{j}(X)$. The classes $$c_{{d_C}-r}^E(C), \ldots, c_{d_C}^E(C)$$ defined by \eqref{sh} are called the {\it Chern classes of $C$}. The sum \[ c_*^E(C)=\sum_{j={d_C}-r}^{d_C} c^E_j(C)\] is called the {\it shadow} of $[\overline{C}]$. For our applications, we will mainly work with conic Lagrangian cycles in cotangent bundles, in which case we have ${d_C}=r$. In fact, the terminology ``Chern classes of $C$"
 is justified by the following result, applied to the cotangent bundle $T^*X$ and elements of the group $L(X)$ of conic Lagrangian cycles:

\begin{proposition}\cite[Proposition~3.3]{AMSS}\label{pmc}
For any constructible function $\varphi \in F(X)$, the Chern classes of the characteristic cycle $CC(\varphi)$ equal the signed MacPherson Chern classes of $\varphi$, namely:
\begin{equation}\label{mc}
    c_j^{T^*X}\left(CC(\varphi)\right)=(-1)^j \cdot  c_j(\varphi) \in A_j(X), \ \ j=0,\ldots, \dim(X),
\end{equation}
where  $c_j(\varphi)$ denotes the $j$-th component of MacPherson's  Chern class $c_*(\varphi)$.
\end{proposition}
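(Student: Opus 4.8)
The plan is to reduce the statement to the defining properties of MacPherson's transformation $c_*$ together with the known compatibility between the characteristic cycle functor $CC$ and the Chern-MacPherson natural transformation. Since both sides of \eqref{mc} are additive in $\varphi$ (the left side because $CC$ is a group homomorphism and the shadow construction \eqref{sh} is additive on cycle classes, the right side because $c_*$ is a homomorphism of abelian groups), and since $F(X)$ is generated by the functions $Eu_Z$ for $Z$ a closed irreducible subvariety of $X$ (equivalently, generated by the $1_Z$, but the $Eu_Z$ form a convenient basis because of \eqref{cc}), it suffices to verify the identity when $\varphi = Eu_Z$. By \eqref{cc} we have $CC(Eu_Z) = (-1)^{\dim Z}\, T^*_Z X$, so the left-hand side of \eqref{mc} becomes $(-1)^{\dim Z} c_j^{T^*X}(T^*_Z X)$, and the right-hand side is $(-1)^j c_j(Eu_Z) = (-1)^j c_j^{Ma}(Z)$, the $j$-th component of the Chern-Mather class of $Z$. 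Thus the proposition is equivalent to the assertion
\[
c_j^{T^*X}\bigl(T^*_Z X\bigr) = (-1)^{j - \dim Z}\, c^{Ma}_j(Z) \in A_j(X).
\]

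First I would record the structure of the cycle class $[\overline{T^*_Z X}]$ in $\overline{T^*X} = \bP(T^*X \oplus \mathbf 1)$. Because $T^*_Z X$ is conic of dimension $r = \dim X$ (it is Lagrangian), the expansion \eqref{sh} reads $[\overline{T^*_Z X}] = \sum_{j=0}^{r} \xi^{\,j}\cap \bar\pi^* c^{T^*X}_j(T^*_Z X)$, which identifies the shadow $c^{T^*X}_*(T^*_Z X)$ as the total class read off from this expansion. Next I would invoke the microlocal description of Chern-Mather classes due to Ginsburg \cite{Gin} and Sabbah \cite{Sab}: the Chern-Mather class $c^{Ma}(Z)$ is, up to the sign $(-1)^{\dim Z}$ and the sign conventions in the Segre/shadow formalism, precisely the shadow of the conormal cycle $T^*_Z X$ computed inside the projective completion of the cotangent bundle. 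Concretely, one uses Sabbah's formula \cite[Lemme 1.2.1]{Sab}, which expresses $c^{Ma}(Z)$ as a push-forward from the projectivized conormal variety $\bP(T^*_Z X)$ of an explicit polynomial in the tautological class and $c^*(TX)$; comparing this with the expansion \eqref{sh} — after accounting for the relation between $\bP(T^*X\oplus\mathbf 1)$ and $\bP(T^*X)$, i.e. between the "affine-fiber-compactified" conormal cycle and its fiberwise projectivization — yields the matching of graded pieces with the sign $(-1)^{j-\dim Z}$, where the $(-1)^j$ tracks the passage between the $\xi$-adic coefficients of $[\overline{C}]$ and the Chern (as opposed to Segre) normalization, and the $(-1)^{\dim Z}$ is the sign from \eqref{cc}.

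The main obstacle is bookkeeping of signs and of the two competing compactifications: the statement involves $\overline{T^*X} = \bP(T^*X \oplus \mathbf 1)$ (fibers compactified to $\bP^r$), whereas the classical microlocal formulas of Ginsburg and Sabbah are phrased on $\bP(T^*X)$ (fibers projectivized to $\bP^{r-1}$). Relating the cycle $[\overline{T^*_Z X}]$ on $\bP(T^*X \oplus \mathbf 1)$ to the cycle $[\bP(T^*_Z X)]$ on $\bP(T^*X)$ requires the standard comparison of Segre classes of a cone and of its projectivization (as in \cite[Ch.~4]{Ful}); one must check that the exceptional "hyperplane at infinity" $\bP(T^*X) \subset \bP(T^*X\oplus \mathbf 1)$ contributes nothing spurious, which holds because $T^*_Z X$ already meets the zero section properly and is conic. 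Once this dictionary is in place, equating the $A_j(X)$-components and carrying the signs through \eqref{sh} and \eqref{cc} gives \eqref{mc}; alternatively, and perhaps more cleanly, one may simply cite \cite[Proposition~3.3]{AMSS} (as the statement does) and restrict the verification to checking that the sign and indexing conventions used in the present paper agree with those of \cite{AMSS}, which reduces the whole argument to the single normalization $c_0^{T^*X}(T^*_Z X) = (-1)^{\dim Z}\deg c^{Ma}_0(Z) = (-1)^{\dim Z}\mathrm{Eu}$-degree, checked on points.
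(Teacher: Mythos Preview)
The paper does not supply its own proof of this proposition: it is stated with the citation \cite[Proposition~3.3]{AMSS} and used as a black box. So there is no ``paper's proof'' to compare against; your closing remark that one may simply cite \cite{AMSS} is precisely what the authors do.

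That said, your sketch of an independent argument is on the right track and is in fact essentially the strategy of \cite{AMSS}: reduce by additivity to $\varphi=Eu_Z$, use $CC(Eu_Z)=(-1)^{\dim Z}T^*_ZX$, and then identify the shadow of the conormal cycle with the (signed) Chern--Mather class via the Sabbah/Ginsburg formalism. The one place where your outline is a bit loose is the passage between $\bP(T^*X\oplus\mathbf{1})$ and $\bP(T^*X)$: rather than arguing about what the hyperplane at infinity ``contributes,'' it is cleaner to use directly the identity $c^{T^*X}_*(C)=c^*(T^*X)\cap s_*(C)$ (this is Proposition~\ref{p1} in the paper), together with the fact that when $C$ is not contained in the zero section one may compute $s_*(C)$ from $\bP(C)$ alone (Remark~\ref{eqf}). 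This reduces the comparison to Sabbah's formula on $\bP(T^*_ZX)$ without any ad hoc check about the zero section, and the signs then fall out from the standard relation between $c^*(TX)$ and $c^*(T^*X)$.
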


If $Z\subset X$ is a closed irreducible subvariety, one gets from \eqref{cc} and \eqref{mc} the following identity:
\be\label{tc}
c_*^{T^*X}(T_{Z}^*X)=(-1)^{\dim Z}\sum_{j\geq 0}(-1)^j{c}^{Ma}_{j}(Z),
\ee
with ${c}^{Ma}_{j}(Z)$ denoting the $j$-th component of the Chern-Mather class of $Z$.

\medskip

We end this section by recalling our main result from \cite{MRWW}, which was used there for proving the Huh-Sturmfels involution conjecture in maximum likelihood estimation.

Let $X$ be a smooth complex algebraic variety, and let $D\subset X$ be a normal crossing divisor. Let $U:=X\setminus D$ be the complement, and let $j:U\hookrightarrow X$ be the open  inclusion. Let $\Omega_X^1(\log D)$ be the sheaf of algebraic one-forms with logarithmic poles along $D$, and denote the total space of the corresponding vector bundle by $T^*(X, D)$. Note that $T^*(X, D)$ contains $T^*U$ as an open subset. Given a conic Lagrangian cycle $\Lambda$ in $T^*U$, we denote its closure in $T^*(X, D)$ by $\overline\Lambda_{\log}$. With these notations, the following result~holds.

\begin{theorem}\label{thm_m}\cite[Theorem 1.1]{MRWW} 
Let $\varphi \in F(U)$ be any constructible function on $U$. Then 
\be\label{eq_main}
c^{T^*(X, D)}_*\Big(\overline{CC(\varphi)}_{\log}\Big)=c^{T^*X}_*\big(CC(\varphi)\big) \in A_*(X),
\ee
where, if $CC(\varphi)=\sum_{k}n_k\Lambda_k$, then $\overline{CC(\varphi)}_{\log}\coloneqq\sum_{k}n_k(\overline{\Lambda_k})_{\log}$. Here, on the right-hand side of \eqref{eq_main}, $\varphi$ is regarded as a constructible function on $X$ by extension by zero.
\end{theorem}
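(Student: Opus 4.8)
The plan is to prove the identity \eqref{eq_main} by comparing the two fiberwise compactifications of \eqref{sh} through a single birational correspondence, after reducing to one conormal cycle. Since $CC$, the Chern--MacPherson transformation $c_*$, and the shadow operation of \eqref{sh} are all additive, it suffices to treat $\varphi = Eu_Z$ for a closed irreducible $Z \subset U$, in which case $CC(\varphi) = (-1)^{\dim Z}\,T^*_Z U =: (-1)^{\dim Z}\Lambda$ on $U$. The aim is then to compute the log shadow $c^{T^*(X,D)}_*(\overline{\Lambda}_{\log})$ on the left and match it with the ordinary shadow on the right. By Proposition \ref{pmc} the latter equals the signed MacPherson class $\sum_j (-1)^j c_j(j_!\varphi)$ of the extension by zero, so the genuinely new content is the computation of the log shadow.

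Next I would set up the comparison geometry. The inclusion of sheaves $\Omega^1_X \hookrightarrow \Omega^1_X(\log D)$ induces a morphism of total spaces $\Phi \colon T^*X \to T^*(X,D)$ over $X$, linear on fibers, which in local coordinates with $D = \{x_1\cdots x_k = 0\}$ sends $(x,\xi)$ to $(x,\, x_1\xi_1,\dots,x_k\xi_k,\, \xi_{k+1},\dots,\xi_n)$. Thus $\Phi$ is an isomorphism over $U$, and its degeneracy locus is exactly $D$, the rank dropping by the number of local branches of $D$. Passing to the fiberwise projective compactifications $P := \bP(T^*X \oplus \mathbf{1})$ and $Q := \bP(T^*(X,D)\oplus \mathbf{1})$, the map $\Phi$ extends to a rational map $P \to Q$ over $X$ whose indeterminacy lies over $D$. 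The two cycles in play, namely the closure of $CC(j_!\varphi)$ in $T^*X$ and the log closure $\overline{\Lambda}_{\log}$ in $T^*(X,D)$, both restrict over $U$ to $\Lambda$ and hence correspond under the isomorphism $\Phi|_U$; the entire discrepancy between them is supported over $D$.

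The key step is to resolve this discrepancy. I would take a proper birational modification $b\colon \widetilde{P} \to P$, with $\widetilde P$ smooth and a morphism $g\colon \widetilde P \to Q$ resolving $\Phi$, all compatible with the projections to $X$. Tracking the strict transform of the closure of $CC(j_!\varphi)$ under $b$ and its image under $g$, one identifies $g_*$ of this strict transform with the closure of $\overline{\Lambda}_{\log}$ in $Q$, modulo cycles supported over $D$. Each shadow is recovered from the corresponding cycle class in $A_*(P)$, resp. $A_*(Q)$, by capping with powers of the hyperplane class $\xi$ and pushing to $X$ as in \eqref{sh}; comparing the two pushforwards of the single class on $\widetilde P$ via the projection formula then reduces \eqref{eq_main} to showing that the boundary correction terms, being vertical over $D$, push forward to zero in $A_*(X)$.

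The hard part will be precisely the analysis of these boundary contributions over $D$: the closure of $CC(j_!\varphi)$ in $T^*X$ acquires Lagrangian components supported over $D$ whose Chern classes must be shown to agree with, or cancel against, those produced by the log closure in $Q$. The normal-crossing combinatorics, where several branches of $D$ meet and $\Phi$ undergoes iterated rank drops, is the technical heart. I would handle it by induction on the number of irreducible components of $D$: the base case is $D$ smooth, where a single blow-up of $P$ along the locus over $D$ on which $\Phi$ degenerates resolves the rational map, and the inductive step exploits the behavior of the log cotangent bundle under adjoining one further smooth component in normal-crossing position, together with the residue sequence $0 \to \Omega^1_X \to \Omega^1_X(\log D) \to \bigoplus_i \mathcal{O}_{D_i} \to 0$ to keep track of the successive elementary modifications.
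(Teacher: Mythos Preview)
This theorem is not proved in the present paper: it is quoted as \cite[Theorem~1.1]{MRWW}, and only its statement is recalled here so that formula~\eqref{8} can be invoked in the proof of Theorem~\ref{thm_main}. There is therefore no proof in this paper against which to compare your proposal.

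On the proposal itself, there is a genuine gap. After reducing to $\varphi=Eu_Z$ you set out to compare $\overline{\Lambda}_{\log}$ with $CC(j_!Eu_Z)$ via the degeneration $\Phi$. But $CC(j_!Eu_Z)$ is in general \emph{not} the single closure $T^*_{\overline Z}X$ of $\Lambda$ in $T^*X$: since $j_!Eu_Z$ differs from $Eu_{\overline Z}$ along $\overline Z\cap D$, its characteristic cycle acquires further irreducible Lagrangian components supported entirely over $D$, and these have nonzero shadows in $A_*(X)$. (For the simplest instance take $Z=U$ with $D$ smooth irreducible: then $CC(j_!1_U)$ is, up to a global sign, $T^*_XX+T^*_DX$, and the component $T^*_DX$ contributes exactly the difference between $c^*(\Omega^1_X)\cap[X]$ and the signed class $c^{SM}_*(U)$.) Such components are conormal varieties dominating strata of $D$, not cycles contained in fibers, so the assertion ``vertical over $D$, hence push forward to zero in $A_*(X)$'' is false. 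The substance of the theorem is precisely that the shadow of the single log-closure $\overline{\Lambda}_{\log}$ \emph{reproduces} these boundary contributions, not that they vanish. Your last paragraph concedes that something must be matched over $D$, but the inductive scheme you outline still owes the key identification: how, at each step of adding a component of $D$, the extra conormal pieces appearing in $CC(j_!\varphi)$ correspond to what the degeneration of $\Phi$ does to the strict transform on $\widetilde P$. That identification is the entire proof, and nothing in the proposal supplies it.
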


In particular, if $\varphi = Eu_Z$ for $Z \subset U$ an irreducible subvariety, then for $\Lambda=T_{Z}^*U$ we get from \eqref{tc} and \eqref{eq_main} that:
\be\label{8}
c^{T^*(X, D)}_*(\overline\Lambda_{\log})=(-1)^{\dim Z}\sum_{j\geq 0}(-1)^j{c}^{Ma}_{j}(Z) \in A_*(X).
\ee

Formula \eqref{8} will play a fundamental role in the proof of Theorem~\ref{thm_main} in Section~\ref{s:proofs}.

%%%%%%%%%%%%%%%%%%%%

\section{Segre classes and Shadow of twisted cycles}
Let $C$ be a cone over a variety $Y$, typically a subcone of a vector bundle. Let $\bP(C)$ be the projectivization of $C$, with projection $\pi:\bP(C) \to Y$. We also let $\bP(C \oplus \bf{1})$ be the projective completion of $C$, with projection map $\overline{\pi}$. Denote the  tautological line bundle on $\bP(C \oplus \bf{1})$ by $\mathcal{O}_{\bP(C \oplus \bf{1})}(-1)$, and denote its inverse by $\mathcal{O}_{\bP(C \oplus \bf{1})}(1)$. Following \cite[Chapter 4]{Ful}, we define the {\it Segre class} of $C$, denoted $s_*(C)$, to be the class in $A_*(Y)$ defined by:
\be
s_*(C):=\overline{\pi}_* \left(\sum_{i \geq 0} c^1(\mathcal{O}_{\bP(C \oplus \bf{1})}(1))^i \cap  [\bP(C \oplus \bf{1})] \right).
\ee
The $i$-th Segre class $s_{i}(C)$ is the $i$-th graded piece of $s_*(C)$. If the cone $C$ is of pure dimension $d_C$ over $Y$, then:
\[ s_{i}(C)
=
\overline{\pi}_{*}\left(c^1({\mathcal{O}_{\bP(C \oplus \bf{1})}}(1))^{d_C-i} \cap [\mathbb {P} (C\oplus {\bf 1})]\right) \in  A_i(Y).\]
\begin{ex}
If $E$ is a vector bundle on $Y$, then $s_*(E)=c^*(E)^{-1} \cap [Y]$; see \cite[Proposition 4.1(a)]{Ful}.
\end{ex}
\begin{remark}\label{eqf}
The addition of the trivial factor ${\bf 1}$ is needed to account for the possibility that $\bP(C)$ may be empty, e.g., when $C$ is contained in the zero section of a vector bundle. However, if $C$ is an irreducible conic variety such that $\bP(C)$ is nonempty, then cf. \cite[Example 4.1.2]{Ful}, we have:
\be\label{noe}
s_*(C):={\pi}_* \left(\sum_{i \geq 0} c^1(\mathcal{O}_{\bP(C)}(1))^i \cap  [\bP(C)] \right).
\ee
In particular, in this case, we have $s_i(C)=0$ for $i\geq \dim C$.
\end{remark}

Let $Y$ be a smooth projective variety
and let $D$ be a reduced
divisor with complement $U\coloneqq Y\setminus D$. Let $E$ be a vector bundle on $Y$, and let $C\subset E$ be an irreducible conic subvariety whose support in $Y$ is not contained in $D$. We consider $E|_U$ as the common open subset of $E$ and $E(D)\coloneqq E\otimes \mathcal{O}_Y(D)$. Denote the closure of $C\cap (E|_U)$ in $E(D)$ by $C'$. The following proposition is a straightforward generalization of 
\cite[Example 3.1.1]{Ful}.
\begin{proposition}\label{prop_Segre}
Under the above notations, we denote the dimension of $C$ by $d_C$. If $C$ is not contained in the zero section of $E$, then the Segre classes of $C$ and $C'$ are related by the identity 
\begin{equation}\label{eq_Segre}
s_{d_C-i-1}(C')=\sum_{0\leq j\leq i} {i \choose j}(-[D])^{i-j}\cap s_{d_C-j-1}(C) \quad\text{for all $i\geq 0$}.
\end{equation}
\end{proposition}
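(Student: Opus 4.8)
\textbf{Proof proposal for Proposition \ref{prop_Segre}.}

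The plan is to reduce the statement to the explicit relation between the two tautological bundles on the (common) projectivized cone, following the strategy of \cite[Example 3.1.1]{Ful}. Since $C$ is not contained in the zero section of $E$, its support $Y_0\subset Y$ is a subvariety not contained in $D$, and $\bP(C)$ and $\bP(C')$ are nonempty; moreover $C\cap(E|_U)$ is dense in $C$ and its closure in $E(D)$ is $C'$, so $\bP(C)$ and $\bP(C')$ have a common dense open subset, namely $\bP(C|_U)\subset \bP(E|_U)$. Consequently $\bP(C)$ and $\bP(C')$ are birational; in fact, because twisting a vector bundle by a line bundle does not change its projectivization as a $Y$-scheme, the tautological quotient construction identifies $\bP(E)\cong\bP(E(D))$ over $Y$, and under this identification $\bP(C)$ and $\bP(C')$ are \emph{equal} as subvarieties. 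The only thing that changes is the tautological line bundle: writing $\tau=\bP(E)=\bP(E(D))$ and $p:\tau\to Y$ for the projection, one has
\[
\mathcal{O}_{\bP(E(D))}(1)\;\cong\;\mathcal{O}_{\bP(E)}(1)\otimes p^*\mathcal{O}_Y(D),
\]
and hence, restricting to the common cone, $c^1(\mathcal{O}_{\bP(C')}(1))=c^1(\mathcal{O}_{\bP(C)}(1))+p^*[D]$ in $A^1$ of that cone. (One must be slightly careful here since $C,C'$ may fail to be vector subbundles, but the statement only involves the restriction of the line bundles $\mathcal{O}(1)$ to $\bP(C)=\bP(C')$, and that restriction is pulled back from the ambient $\bP(E)=\bP(E(D))$, so the twisting formula for $\mathcal{O}(1)$ on projective bundles applies directly.)

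Next I would substitute this into the formula \eqref{noe} of Remark \ref{eqf}, which applies because both $\bP(C)$ and $\bP(C')$ are nonempty. Using $h:=c^1(\mathcal{O}_{\bP(C)}(1))$ and $h'=h+p^*[D]$, and noting $\dim C=\dim C'=d_C$, we get for each $i\ge 0$
\[
s_{d_C-i-1}(C')=p_*\!\left((h+p^*[D])^{\,i}\cap[\bP(C')]\right)
=p_*\!\left(\sum_{0\le j\le i}\binom{i}{j}(p^*[D])^{\,i-j}\cap h^{\,j}\cap[\bP(C)]\right),
\]
where I expanded by the binomial theorem and used $[\bP(C')]=[\bP(C)]$. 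Now pull the classes $(p^*[D])^{i-j}$ out of the pushforward by the projection formula, obtaining
\[
s_{d_C-i-1}(C')=\sum_{0\le j\le i}\binom{i}{j}[D]^{\,i-j}\cap p_*\!\left(h^{\,j}\cap[\bP(C)]\right)
=\sum_{0\le j\le i}\binom{i}{j}[D]^{\,i-j}\cap s_{d_C-j-1}(C).
\]
This is \eqref{eq_Segre} up to the sign $(-1)^{i-j}$; to recover the sign as stated I would instead invoke the twist in the opposite direction, $\mathcal{O}_{\bP(C)}(1)\cong\mathcal{O}_{\bP(C')}(1)\otimes p^*\mathcal{O}_Y(-D)$, i.e.\ $h=h'-p^*[D]$, and run the same computation expressing $s_\bullet(C')$ in terms of powers of $h'$ — or, equivalently, solve the sign-free system above by Möbius inversion of binomial coefficients, which produces exactly the $(-[D])^{i-j}$ weights. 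Either way the combinatorics is routine.

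The one genuine subtlety — and the step I expect to require the most care — is the identification $\bP(C)=\bP(C')$ together with the precise comparison of tautological bundles when $C$ is merely a \emph{conic} subvariety rather than a subbundle. Two points must be checked: (1) that forming projective closures commutes appropriately, i.e.\ that $C'$, defined as the closure in $E(D)$ of $C\cap E|_U$, really does have projectivization equal to $\bP(C)$ inside $\bP(E(D))=\bP(E)$ — this follows because $C|_U=C'|_U$ and taking closures of conic sets corresponds to taking closures of their projectivizations; and (2) that the restriction of $\mathcal{O}_{\bP(E(D))}(1)$ to $\bP(C')$ agrees with $\mathcal{O}_{\bP(C')}(1)$ defined intrinsically from the cone $C'$ — this is immediate from functoriality of $\mathcal{O}(1)$ under the inclusion of cones $C'\hookrightarrow E(D)$, and likewise for $C$. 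Once these identifications are in place, the rest is the binomial expansion and projection formula as above. I would also remark that the hypothesis ``$C$ not contained in the zero section'' is exactly what guarantees $\bP(C)\neq\varnothing$ so that \eqref{noe} is available, while ``support not contained in $D$'' guarantees $C\cap E|_U$ is dense in $C$ so that $C'$ is genuinely the relevant twisted cone.
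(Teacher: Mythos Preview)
Your approach is exactly the paper's: identify $\bP(C)=\bP(C')$ under the canonical isomorphism $\bP(E)\cong\bP(E(D))$, compare the two tautological $\mathcal{O}(1)$'s, expand binomially, and push forward via the projection formula. The only defect is a sign slip in your twist formula. Since twisting $E$ by a line bundle $L$ twists the tautological \emph{subbundle} by $p^*L$, one has $\mathcal{O}_{\bP(E(D))}(-1)\cong\mathcal{O}_{\bP(E)}(-1)\otimes p^*\mathcal{O}_Y(D)$; dualizing gives
\[
\mathcal{O}_{\bP(E(D))}(1)\;\cong\;\mathcal{O}_{\bP(E)}(1)\otimes p^*\mathcal{O}_Y(-D),
\]
so $h'=h-p^*[D]$, not $h'=h+p^*[D]$. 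With this correction the binomial expansion of $(h-p^*[D])^i$ yields the factor $(-[D])^{i-j}$ directly, exactly as in the paper, and no workaround is needed.

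Your proposed fixes would not actually rescue the sign: the ``opposite direction'' relation $h=h'-p^*[D]$ you wrote is the same equation as $h'=h+p^*[D]$, and M\"obius-inverting your unsigned identity would express $s_\bullet(C)$ in terms of $s_\bullet(C')$, i.e.\ the reverse of \eqref{eq_Segre}. So the repair is simply to correct the twist, after which your argument and the paper's coincide.
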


\begin{proof}
Under the natural isomorphism $\bP(E)=\bP(E(D))$, the projectivization $\bP(C)$ of $C$ is the same as that of $C'$. 
Notice that 
$\mathcal{O}_{\bP(E(D))}(-1)=\mathcal{O}_{\bP(E)}(-1)\otimes \pi^*(\mathcal{O}_Y(D))$.
Moreover, the pullback of $c^1(\mathcal{O}_{\bP(E)}{(1)})$ to $\bP(C)$ is equal to $c^1(\mathcal{O}_{\bP(C)}(1))$. 
Thus, the Segre classes of $C$ and $C'$ can also be expressed as
\[
s_*(C)=\pi_{*}\left(\sum_{i\geq 0} c^1\left(\mathcal{O}_{\bP(E)}(1)\right)^i \cap [\bP(C)]\right)
\]
and 
\[s_*(C')=\pi_{*}\left(\sum_{i\geq 0} \left(c^1(\mathcal{O}_{\bP(E)}(1))-\pi^*[D]\right)^i \cap [\bP(C)]\right)
\]
where 
$\pi: \bP(E)=\bP(E(D))\to Y$ is the projective bundle map. 

Combining the above equations and using the projection formula, we have 
\begin{align*}
s_{d_C-i-1}(C')&=\pi_{*}\left(\left(c^1(\mathcal{O}_{\bP(E)}(1))-\pi^*[D]\right)^i \cap [\bP(C)]\right)\\
&=\sum_{0\leq j\leq i} {i \choose j}(-[D])^{i-j}\cap \pi_{*}\left(c^1(\mathcal{O}_{\bP(E)}(1))^j \cap [\bP(C)]\right)\\
&=\sum_{0\leq j\leq i} {i \choose j}(-[D])^{i-j}\cap s_{d_C-j-1}(C)
\end{align*}
for any $i\geq 0$. 
\end{proof}

We can use the following elementary formula to simplify formula \eqref{eq_Segre}. 
\begin{lemma}\label{lemma_binomial}
As power series, we have the following identity:
\[
\sum_{k\geq 0}{k+n \choose n}(-t)^k=\left( 1-t+t^2-\cdots \right)^{n+1}=(1+t)^{-n-1}.
\]
\end{lemma}

\begin{corollary}\label{c1}
Under the notations and assumptions of Proposition \ref{prop_Segre}, we have
\begin{equation}\label{eq_D}
s_{*}(C')=\sum_{j\geq 0} (1+[D])^{-j-1} \cap s_{d_C-j-1}(C).
\end{equation}
\end{corollary}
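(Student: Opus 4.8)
\textbf{Proof plan for Corollary~\ref{c1}.} The plan is to assemble the graded identity \eqref{eq_Segre} of Proposition~\ref{prop_Segre} into a single generating-function identity and then recognize the resulting coefficients via the binomial series of Lemma~\ref{lemma_binomial}. Concretely, I would write the total Segre class of $C'$ as the sum over $i\geq 0$ of its graded pieces $s_{d_C-i-1}(C')$, substitute the right-hand side of \eqref{eq_Segre}, and then interchange the order of the two sums so that the outer sum is over $j$ (the index attached to $s_{d_C-j-1}(C)$) and the inner sum is over $k:=i-j\geq 0$.

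First I would fix $j\geq 0$ and collect all the terms in which $s_{d_C-j-1}(C)$ appears; these come from every $i\geq j$, and for such $i$ the coefficient is $\binom{i}{j}(-[D])^{i-j}$. Re-indexing with $k=i-j$, this contribution becomes $\Bigl(\sum_{k\geq 0}\binom{k+j}{j}(-[D])^{k}\Bigr)\cap s_{d_C-j-1}(C)$. By Lemma~\ref{lemma_binomial} (applied with $n=j$ and $t=[D]$, which is legitimate since $[D]$ is nilpotent in the Chow ring of the projective variety $Y$, so the power series is actually a finite sum and convergence is not an issue), the parenthesized operator equals $(1+[D])^{-j-1}$. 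Summing over $j\geq 0$ then yields exactly \eqref{eq_D}. I should also remark that only finitely many $j$ contribute, since $s_{d_C-j-1}(C)=0$ once $d_C-j-1<0$, i.e.\ for $j\geq d_C$ — in fact, by Remark~\ref{eqf}, $s_i(C)$ already vanishes for $i\geq \dim C$ whenever $\bP(C)$ is nonempty, which holds here because $C$ is not contained in the zero section.

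The only mild subtlety — and the closest thing to an obstacle — is the bookkeeping of the interchange of summation together with the shift in indices between the left side ($s_{d_C-i-1}$) and the right side ($s_{d_C-j-1}$): one must be careful that the identity \eqref{eq_D} is understood as an equality of total classes in $A_*(Y)$, matched degree by degree, rather than as a naive termwise equality of the two displays. Since everything is a finite sum after accounting for nilpotence of $[D]$ and vanishing of high Segre classes, no analytic care is needed, and the corollary follows immediately.
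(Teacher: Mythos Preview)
Your proposal is correct and follows essentially the same route as the paper's own proof: sum the graded identities \eqref{eq_Segre} over $i$, swap the order of summation, reindex with $k=i-j$, and invoke Lemma~\ref{lemma_binomial} to identify $\sum_{k\geq 0}\binom{k+j}{j}(-[D])^k$ with $(1+[D])^{-j-1}$. The paper presents this in three display lines without the commentary on nilpotence and finiteness, but the argument is the same. One small stylistic quibble: your ``in fact'' clause citing Remark~\ref{eqf} refers to vanishing at the \emph{top} ($s_i(C)=0$ for $i\geq d_C$), which is a different range from the vanishing for $j\geq d_C$ you just stated; it is not a strengthening of that statement but an independent (and here unneeded) observation.
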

\begin{proof}
By Proposition \ref{prop_Segre} and Lemma \ref{lemma_binomial}, we have
\begin{align*}\label{eq_Segre}
s_{*}(C')&=\sum_{j, k\geq 0}{k+j \choose j}(-[D])^{k}\cap s_{d_C-1-j}(C)\\
&=\sum_{j\geq 0}\left(\sum_{k\geq 0}{k+j \choose j}(-[D])^{k} \right)\cap s_{d_C-1-j}(C)\\
&=\sum_{j\geq 0}(1+[D])^{-j-1}\cap s_{d_C-1-j}(C). \qedhere
\end{align*}
\end{proof}

\begin{remark}\label{remark_zero}
When the irreducible subvariety $C\subset E$ is contained in the zero section, by definition, we can identify $C$ and $C'$ as the same subvariety of $Y$. Thus, in this case, we have $s_*(C)=s_*(C')$. Moreover, by definition, all Segre classes of $C$ and $C'$ vanish except in degree $d_C$. In other words, $s_*(C)=s_*(C')=s_{d_C}(C)=s_{d_C}(C')$.
\end{remark}

We recall here the following useful fact.
\begin{proposition}\cite[Lemma 2.12]{Alu2}\label{p1}
For any conic subvariety $C$ in a vector bundle $E$ over $Y$, one has
\begin{equation}\label{eq_CS}
c^E_*(C)=c^*(E)\cap s_*(C),
\end{equation}
with $c^*(E)$ denoting the total cohomology Chern class of $E$, and $c^E_*(C)$ the shadow of $C$ (as defined in the previous section).
\end{proposition}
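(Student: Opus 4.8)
The plan is to deduce the identity directly from the defining expansion~\eqref{sh} of $[\overline{C}]$ by one pushforward computation along the projective completion $\bar\pi\colon\overline{E}=\bP(E\oplus\mathbf{1})\to Y$, where $\xi=c^1(\mathcal{O}_{\overline{E}}(1))$ is the first Chern class of the hyperplane bundle.

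First I would record the compatibility between the two compactifications in play. The closure $\overline{C}$ of $C$ inside $\overline{E}$ coincides with $\bP(C\oplus\mathbf{1})$; under the closed embedding $\overline{C}\hookrightarrow\overline{E}$ the bundle $\mathcal{O}_{\overline{E}}(1)$ restricts to $\mathcal{O}_{\bP(C\oplus\mathbf{1})}(1)$; and the restriction of $\bar\pi$ to $\overline{C}$ is the structure map of $\bP(C\oplus\mathbf{1})$. Hence, by the projection formula for this embedding, the definition of the Segre class of the cone $C$ rewrites inside $\overline{E}$ as
\[
s_*(C)=\bar\pi_*\big(\textstyle\sum_{i\ge0}\xi^{\,i}\cap[\overline{C}]\big),
\]
a finite sum, since $\xi^{\,i}\cap[\overline{C}]=0$ for $i>d_C$.

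Next I would substitute $[\overline{C}]=\sum_{j=d_C-r}^{d_C}\xi^{\,j-d_C+r}\cap\bar\pi^{*}c^E_j(C)$ from~\eqref{sh}, interchange the two summations, and use the projective-bundle pushforward formula for $\bar\pi$ (see, e.g., \cite{Ful}): for any $\alpha\in A_*(Y)$ one has $\bar\pi_*(\xi^{\,N}\cap\bar\pi^{*}\alpha)=0$ when $N<r$, while summing over all powers gives $\bar\pi_*\big(\sum_{N\ge0}\xi^{\,N}\cap\bar\pi^{*}\alpha\big)=c^*(E)^{-1}\cap\alpha$ -- this is the Segre-class formula for the projective bundle $\overline{E}=\bP(E\oplus\mathbf{1})$, consistent with the Example $s_*(E\oplus\mathbf{1})=c^*(E\oplus\mathbf{1})^{-1}\cap[Y]=c^*(E)^{-1}\cap[Y]$. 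Applying this with $\alpha=c^E_j(C)$ then yields
\[
s_*(C)=\sum_{j}c^*(E)^{-1}\cap c^E_j(C)=c^*(E)^{-1}\cap c^E_*(C),
\]
and capping both sides with $c^*(E)$ produces the claimed identity $c^E_*(C)=c^*(E)\cap s_*(C)$.

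The only step I would treat with real care is the last computation: for the pushforward to recover the \emph{total} inverse Chern class $c^*(E)^{-1}$ rather than a truncation of it, one needs each geometric series $\sum_{i\ge0}\xi^{\,i+j-d_C+r}$ to start at an exponent $\le r$, which is exactly the constraint $j\le d_C$ encoded in~\eqref{sh}; the finitely many extra terms with exponent in the range $[\,j-d_C+r,\ r-1\,]$ push forward to zero. This bookkeeping is also why the statement holds uniformly, with no separate argument when $C$ lies in the zero section of $E$ (there $d_C\le\dim Y$ and~\eqref{sh} degenerates, but nothing in the argument changes). Everything else is formal, amounting to the projection formula together with the standard Chern/Segre relation for the bundle $E\oplus\mathbf{1}$.
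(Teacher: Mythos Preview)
The paper does not give its own proof of this proposition; it is quoted verbatim from \cite[Lemma~2.12]{Alu2} and used as a black box. Your argument is correct and is essentially the standard one: express $s_*(C)$ as $\bar\pi_*\big(\sum_{i\ge0}\xi^{\,i}\cap[\overline{C}]\big)$ via the identification $\overline{C}=\bP(C\oplus\mathbf{1})$, substitute the defining expansion~\eqref{sh}, and apply the Segre identity $\bar\pi_*\big(\sum_{N\ge0}\xi^{\,N}\cap\bar\pi^*\alpha\big)=c^*(E\oplus\mathbf{1})^{-1}\cap\alpha=c^*(E)^{-1}\cap\alpha$ together with the vanishing $\bar\pi_*(\xi^{\,N}\cap\bar\pi^*\alpha)=0$ for $N<r$. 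The bookkeeping you single out in the last paragraph---that the constraint $j\le d_C$ in~\eqref{sh} guarantees each inner series starts at an exponent $\le r$, so the ``missing'' low powers push forward to zero---is exactly the point, and your treatment of it is accurate.
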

Combining Corollary \ref{c1} with Proposition \ref{p1}, we have the following.
\begin{corollary}\label{cor_TBD}
For any irreducible conic subvariety $C$ in a vector bundle $E$ over $Y$, and $C'$ defined as above, we have  
\begin{equation}\label{eq_cor}
c_*^{E(D)}(C')=\sum_{k\geq 0}(1+[D])^{r-k}\cap c^E_{d_C-k}(C),
\end{equation}
where $r$ is the rank of $E$. 
\end{corollary}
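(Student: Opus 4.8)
The plan is to combine the two results that immediately precede the statement: Corollary~\ref{c1}, which expresses the Segre class of the twisted cone $C'\subset E(D)$ in terms of the Segre classes of $C\subset E$ and the divisor class $[D]$, and Proposition~\ref{p1}, which converts Segre classes into shadows via the total Chern class of the ambient bundle. The key point is that $E$ and $E(D)$ have the same rank $r$, but different total Chern classes, so the bookkeeping has to keep track of how $c^*(E(D))$ differs from $c^*(E)$ along the way.

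First I would apply Proposition~\ref{p1} to $C'\subset E(D)$ to write $c_*^{E(D)}(C')=c^*(E(D))\cap s_*(C')$. Then I would substitute the formula from Corollary~\ref{c1}, namely $s_*(C')=\sum_{j\geq 0}(1+[D])^{-j-1}\cap s_{d_C-j-1}(C)$, obtaining
\[
c_*^{E(D)}(C')=c^*(E(D))\cap \sum_{j\geq 0}(1+[D])^{-j-1}\cap s_{d_C-j-1}(C).
\]
Next I would again invoke Proposition~\ref{p1}, this time for $C\subset E$, in the inverted form $s_*(C)=c^*(E)^{-1}\cap c^E_*(C)$, so that each graded piece $s_{d_C-j-1}(C)$ can be written in terms of the shadow components $c^E_i(C)$. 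The main calculation is then reconciling the factor $c^*(E(D))\cap c^*(E)^{-1}$ with the powers of $(1+[D])$: since $E(D)=E\otimes\mathcal{O}_Y(D)$ and $\mathrm{rank}\,E=r$, one has the standard identity relating $c^*(E\otimes L)$ to $c^*(E)$ and $c^1(L)=[D]$, which after passing to the relevant graded pieces collapses the $c^*(E(D))\cap c^*(E)^{-1}$ contribution into exactly the missing power of $(1+[D])$ needed to turn $(1+[D])^{-j-1}$ into $(1+[D])^{r-k}$ with the reindexing $k=j+1$. Matching the index ranges (recalling $s_i(C)=0$ for $i\geq \dim C$, so only $j$ with $d_C-j-1\geq 0$ contribute, corresponding to $k=d_C-i$ with $0\le i$) gives the stated formula.

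The hard part will be carrying out the identity $c^*(E(D))\cap c^*(E)^{-1}\cap(1+[D])^{-j-1}=(1+[D])^{r-(j+1)}$ cleanly at the level of graded pieces. A direct approach via the splitting principle, writing $c^*(E)=\prod(1+x_a)$ so that $c^*(E(D))=\prod(1+x_a+[D])$, is conceptually transparent but does not obviously factor as a pure power of $(1+[D])$ term by term; the equality only holds after capping against a Segre class of $C$ and summing. An alternative, which I expect to be cleaner, is to avoid splitting $E$ altogether: instead rewrite $s_*(C')$ directly as a pushforward from $\bP(C)$ of powers of $c^1(\mathcal{O}_{\bP(E)}(1))-\pi^*[D]$ (as in the proof of Proposition~\ref{prop_Segre}), then cap with $c^*(E(D))$ pulled back to $\bP(E)$, and recognize the resulting expression as $\bar\pi_*$ of a Chern-class power on $\bP(C\oplus\mathbf{1})$ computing the shadow directly — essentially re-deriving the shadow formula \eqref{sh} in the twisted bundle. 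I would try the first (formal series) route for brevity, falling back on the geometric pushforward computation if the cap-product manipulations threaten to become opaque.

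Finally, I would address the degenerate case separately: if $C$ is contained in the zero section of $E$, then by Remark~\ref{remark_zero} we have $C=C'$ with all Segre classes vanishing except $s_{d_C}$, and both sides of \eqref{eq_cor} reduce to the single term $k=0$, i.e. $(1+[D])^r\cap c^E_{d_C}(C)$ versus $c^{E(D)}_{d_C}(C')$; a short check using Proposition~\ref{p1} for a cone in the zero section (where the shadow is just the ambient Chern class capped with the fundamental class) confirms these agree, so the formula holds in this case as well and the hypothesis that $C$ is not in the zero section is actually unnecessary here.
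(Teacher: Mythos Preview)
Your opening moves are exactly the paper's: apply Proposition~\ref{p1} to get $c_*^{E(D)}(C')=c^*(E(D))\cap s_*(C')$, then substitute Corollary~\ref{c1}. But your step~3, inverting Proposition~\ref{p1} to write $s_*(C)=c^*(E)^{-1}\cap c^E_*(C)$ and then hoping that $c^*(E(D))\cdot c^*(E)^{-1}$ collapses to $(1+[D])^r$, is where you diverge, and it is a genuine detour. That ratio is \emph{not} a pure power of $(1+[D])$ (write $c^*(E)=\prod_a(1+x_a)$ and $c^*(E(D))=\prod_a(1+x_a+[D])$; the quotient visibly depends on the $x_a$), and you give no mechanism by which capping with Segre classes of $C$ would make the unwanted Chern roots disappear. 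Your fallback geometric pushforward sketch is not worked out either. The paper avoids this entirely by going the other way: rather than inverting $c^*(E)$, it expands $c^*(E(D))$ via the identity~\eqref{eq_known}, $c^*(E(D))=\sum_i c^i(E)\,(1+[D])^{r-i}$, and multiplies out:
\[
c^*(E(D))\cap\sum_{j\ge 0}(1+[D])^{-j-1}\cap s_{d_C-j-1}(C)
=\sum_{i,j\ge0}(1+[D])^{r-i-j-1}\cap c^i(E)\cap s_{d_C-j-1}(C).
\]
Setting $k=i+j$ and recognizing $\sum_{i=0}^{k} c^i(E)\cap s_{d_C-k-1+i}(C)$ as the degree-$(d_C-k-1)$ piece of $c^*(E)\cap s_*(C)=c^E_*(C)$ gives the formula at once (after noting $c^E_{d_C}(C)=0$ via Remark~\ref{eqf}). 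The point is that the \emph{individual} $c^i(E)$'s, not the inverse total class, are what pair with the graded Segre pieces to produce shadow components; the expansion~\eqref{eq_known} makes them available.

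Your zero-section paragraph contains a separate error. When $C$ lies in the zero section one has $s_*(C)=s_{d_C}(C)=[C]$, but then $c^E_*(C)=c^*(E)\cap[C]$ has components $c^E_{d_C-k}(C)=c^k(E)\cap[C]$ for all $0\le k\le r$, not just $k=0$. So neither side of~\eqref{eq_cor} reduces to a single term. The correct verification (which is the paper's) is that both sides equal $c^*(E(D))\cap[C]$: the left by Proposition~\ref{p1} and Remark~\ref{remark_zero}, the right because $\sum_{k}(1+[D])^{r-k}\cap c^k(E)\cap[C]=c^*(E(D))\cap[C]$ is precisely~\eqref{eq_known} capped with $[C]$.
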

\begin{remark}\label{remark_stop}
When $k>r$, $c^E_{d_C-k}(C)=0$ by \eqref{sh}. So the summation in \eqref{eq_cor} stops at $k=r$.
\end{remark}

\begin{remark}
If $C$ is the zero section of $E$, then the Segre class of $C$ is the fundamental class of $Y$. In this case, by \eqref{eq_CS} we get $c_*(E)\coloneqq c^*(E)\cap [Y]=c^E_*(C)$ and, similarly, $c_*(E(D))=c^{E(D)}_*(C')$. 
{Corollary~\ref{cor_TBD} }
reduces to the well-known (cohomological) Chern class formula \begin{equation}\label{eq_known}
c^*(E(D))=\sum_{0\leq i\leq r}c^{i}(E)\cdot (1+[D])^{r-i}.
\end{equation}
\end{remark}

\begin{proof}[Proof of Corollary \ref{cor_TBD}]
First, we assume that $C$ is not contained in the zero section of~$E$. By equations \eqref{eq_D} and \eqref{eq_CS}, we have
\begin{align*}
c_*^{E(D)}(C')&=c^*(E(D))\cap s_*(C')\\
&=\left(\sum_{i\geq 0}c^i(E)\cdot (1+[D])^{r-i}\right)\cap \left(\sum_{j\geq 0} (1+[D])^{-j-1}\cap s_{d_C-j-1}(C). \right)\\
&=\sum_{i, j\geq 0}\left(c^i(E)\cdot (1+[D])^{r-i-j-1}\right)\cap s_{d_C-j-1}(C) \\
&=\sum_{k\geq 0}(1+[D])^{r-k-1}\cap \left(\sum_{0\leq i\leq k}c^i(E)\cap s_{d_C-k+i-1}(C)\right)\\
&=\sum_{k\geq 0}(1+[D])^{r-k-1}\cap c^E_{d_C-k-1}(C).
\end{align*}
This is equivalent to \eqref{eq_cor}
since, by \eqref{eq_CS} and Remark \ref{eqf}, 
\[
c^E_{d_C}(C)=\sum_{k\geq 0}c^k(E)\cap s_{d_C+k}(C)=0.
\]

When $C$ is contained in the zero section of $E$, by \eqref{eq_CS}, \eqref{eq_known} and Remark~\ref{remark_zero}, we have
\begin{align*}
c_*^{E(D)}(C')&=c^*(E(D))\cap s_{d_C}(C)\\
&=\sum_{0\leq i\leq r}c^{i}(E)\cdot (1+[D])^{r-i}\cap s_{d_C}(C)\\
&=\sum_{0\leq i\leq r}(1+[D])^{r-i}\cap \big(c_{i}(E)\cap s_{d_C}(C)\big)\\
&=\sum_{0\leq i\leq r}(1+[D])^{r-i}\cap c_{d_C-i}^E(C)
\end{align*}
which is the same as \eqref{eq_cor} by Remark \ref{remark_stop}.
\end{proof}

%%%%%%%%%%%%%%%%%%%%%%%%%%%%

\section{Twisted logarithmic cotangent bundle}\label{sec_twist}
Fix the standard compactification $\bC^n\subset \bP^n$, and denote the complement divisor by $H_\infty$. Denote the coordinate functions of $\bC^n$ by $z_i$, $1\leq i\leq n$. The following proposition will allow us to relate the results in the previous section and the study of LO bidegrees. 
\begin{proposition}\label{prop_trivial}
The twisted logarithmic cotangent bundle $\Omega_{\bP^n}^1(\log H_\infty)(H_\infty)$ is a trivial bundle. Moreover, the 1-forms $dz_i$ extend to global sections of $\Omega_{\bP^n}^1(\log H_\infty)(H_\infty)$, and they form a trivialization of $\Omega_{\bP^n}^1(\log H_\infty)(H_\infty)$.
\end{proposition}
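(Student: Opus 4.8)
The plan is to verify the statement by an explicit local computation in the standard affine charts of $\bP^n$, exhibiting the $n$ forms $dz_1,\dots,dz_n$ as a frame. First I would set up coordinates: write $\bP^n$ with homogeneous coordinates $[x_0:\cdots:x_n]$, take $\bC^n=\{x_0\neq 0\}$ with affine coordinates $z_i=x_i/x_0$, so that $H_\infty=\{x_0=0\}$. On the chart $\{x_0\neq 0\}$ the sheaf $\Omega^1_{\bP^n}(\log H_\infty)$ restricts to $\Omega^1_{\bC^n}$, which is free with basis $dz_1,\dots,dz_n$, and twisting by $\sO(H_\infty)$ changes nothing there since $\sO(H_\infty)$ is trivial on $\{x_0\neq 0\}$ (a generating section being $1/x_0$, or rather the section vanishing nowhere on this chart). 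So on this chart the claim is immediate, and the only real content is to check that the $dz_i$ extend across $H_\infty$ as nowhere-vanishing sections of the twisted bundle, and together remain a frame there.

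Next I would move to a second chart, say $U_1=\{x_1\neq 0\}$, with affine coordinates $w_0=x_0/x_1$, $w_j=x_j/x_1$ for $j\ge 2$; here $H_\infty=\{w_0=0\}$, so $\Omega^1_{\bP^n}(\log H_\infty)$ is free on $\{dw_0/w_0,\ dw_2,\dots,dw_n\}$, and $\sO(H_\infty)$ is generated on $U_1$ by the section with divisor $\{w_0=0\}$, i.e. we must multiply by $w_0$. On the overlap one has $z_1=1/w_0$ and $z_j=w_j/w_0$ for $j\ge 2$, hence
\[
dz_1=-\frac{dw_0}{w_0^2},\qquad dz_j=\frac{dw_j}{w_0}-\frac{w_j\,dw_0}{w_0^2}\quad(j\ge 2).
\]
After multiplying by the twisting section $w_0$ (the transition to $\Omega^1(\log H_\infty)(H_\infty)$ on $U_1$), $dz_1$ becomes $-\,dw_0/w_0=-(dw_0/w_0)$, a logarithmic generator, and $dz_j$ becomes $dw_j-w_j\,(dw_0/w_0)$. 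I would then write the change-of-basis matrix expressing $(w_0\,dz_1,\dots,w_0\,dz_n)$ in terms of the standard frame $\bigl(dw_0/w_0,\ dw_2,\dots,dw_n\bigr)$ of $\Omega^1_{U_1}(\log H_\infty)(H_\infty)$ and observe that this matrix is triangular with $\pm1$ on the diagonal, hence invertible over $\sO_{U_1}$ — in particular along $H_\infty\cap U_1$. The same computation works verbatim on each $U_i$, $i\ge 1$, by symmetry. This shows the $dz_i$ are global sections of $\Omega^1_{\bP^n}(\log H_\infty)(H_\infty)$ and that they trivialize it; since a rank-$n$ bundle with a global frame is trivial, the proposition follows.

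The main obstacle — really the only subtle point — is bookkeeping the twist correctly: one must keep straight that passing from $\Omega^1_{\bP^n}(\log H_\infty)$ to its twist by $\sO(H_\infty)$ is, chart by chart, multiplication by the local equation of $H_\infty$ (which is $1$ up to a unit on $\bC^n$ but is $w_0$ on $U_i$, $i\ge1$), and that this is exactly what converts the order-$2$ poles appearing in $dz_i$ on the other charts into at worst logarithmic poles, keeping everything holomorphic and the frame nondegenerate. A cleaner alternative I would mention, avoiding charts, is the exact sequence
\[
0\longrightarrow \Omega^1_{\bP^n}\longrightarrow \Omega^1_{\bP^n}(\log H_\infty)\longrightarrow \sO_{H_\infty}\longrightarrow 0,
\]
twisted by $\sO(H_\infty)$, together with the Euler sequence: twisting the Euler sequence $0\to\Omega^1_{\bP^n}\to\sO(-1)^{\oplus(n+1)}\to\sO\to0$ by $\sO(1)=\sO(H_\infty)$ gives $0\to\Omega^1_{\bP^n}(1)\to\sO^{\oplus(n+1)}\to\sO(1)\to0$, and chasing through identifies $\Omega^1_{\bP^n}(\log H_\infty)(H_\infty)$ with the kernel of a surjection $\sO^{\oplus(n+1)}\to\sO$ that admits a splitting (coming from the coordinate $x_0$ cutting out $H_\infty$), yielding a trivial bundle of rank $n$; one then checks the $dz_i$ map to the standard basis of that trivial bundle. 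I would present the direct chart computation as the primary argument since it simultaneously proves triviality and pins down the explicit trivialization claimed in the statement.
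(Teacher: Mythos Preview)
Your proposal is correct and follows essentially the same approach as the paper: both arguments verify the claim by a direct local computation on the affine charts $U_k=\{p_k\neq 0\}$, expressing each $dz_i$ in the local coordinates of $U_1$ and checking that the resulting change-of-basis matrix to the standard frame of $\Omega^1_{\bP^n}(\log H_\infty)(H_\infty)|_{U_1}$ is invertible over $\cO_{U_1}$. Your phrasing ``multiply by the twisting section $w_0$'' is slightly awkward (you are really trivializing $\cO(H_\infty)|_{U_1}$ via the generator $1/w_0$, so that $dz_i\in \Omega^1(\log H_\infty)(H_\infty)$ corresponds to $w_0\,dz_i\in \Omega^1(\log H_\infty)$), but the computation itself matches the paper's line for line.
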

\begin{proof}
Let $p_0, \ldots, p_n$ be the homogeneous coordinates of $\bP^n$ such that $z_i=\frac{p_i}{p_0}$. Let $U_k\subset \bP^n$ be the affine chart defined by $p_k\neq 0$. In $U_0$, the vector bundle $\Omega_{\bP^n}^1(\log H_\infty)(H_\infty)$ is equal to $\Omega^1_{\bC^n}$, and the sections $dz_i$, $1\leq i\leq n$, generate the locally free sheaf $\Omega^1_{\bC^n}$. 

Without loss of generality, we need to show that the sections $dz_i$, $1\leq i\leq n$, extend to sections of $\Omega_{\bP^n}^1(\log H_\infty)(H_\infty)|_{U_1}$ and they generate the locally free sheaf $\Omega_{\bP^n}^1(\log H_\infty)(H_\infty)|_{U_1}$. In fact, in $U_1$, 
\[
dz_i=d\left(\frac{p_i}{p_0}\right)=d\left(\frac{p_i/p_1}{{p_0}/{p_1}}\right)=\frac{d(p_i/p_1)}{p_0/p_1}-\frac{p_i}{p_1}\cdot \frac{d(p_0/p_1)}{p_0/p_1}\cdot \frac{1}{p_0/p_1}.
\]
Clearly, they are sections of $\Omega_{\bP^n}^1(\log H_\infty)(H_\infty)|_{U_1}$. Notice that for $i\geq 2$, 
\[
dz_i=\frac{d(p_i/p_1)}{p_0/p_1} + \frac{p_i}{p_1}\cdot dz_1.
\]
Thus, as subsheaves of $\Omega_{\bP^n}^1(\log H_\infty)(H_\infty)|_{U_1}$,
\[
\cO_{U_1}\cdot (dz_1, \ldots, dz_n)=\cO_{U_1}\cdot \left(\frac{d(p_0/p_1)}{p_0/p_1}\cdot \frac{1}{p_0/p_1}, \frac{d(p_2/p_1)}{p_0/p_1}, \ldots, \frac{d(p_n/p_1)}{p_0/p_1}\right).
\]
Thus, the sections $dz_1, \ldots, dz_n$ generate $\Omega_{\bP^n}^1(\log H_\infty)(H_\infty)|_{U_1}$.
\end{proof}

%%%%%%%%%%%%%%%%%%%%%%%%%%%%

\section{The proofs}\label{s:proofs}
In this section we prove our main results stated in the introduction.

\begin{proof}[Proof of Theorem \ref{thm_main}]
Recall that $X\subset \bC^n$ is a $d$-dimensional irreducible subvariety. Denote the conormal variety $T^*_X\bC^n$ by $\Lambda$. Let $\overline\Lambda_{\log}$ be the closure of $\Lambda$ in $E\coloneqq \Omega^1_{\bP^n}(\log H_\infty)$. Then, by formula \eqref{8},
\[
c_*^E(\overline\Lambda_{\log})=
(-1)^d \cdot \sum_{j\geq 0}(-1)^{j}{c}^{Ma}_{j}(X).
\]
Therefore, following the notations in Section \ref{intro}, we have
\[
c_*^E(\overline\Lambda_{\log})=(-1)^d \cdot \left(a_0[\bP^0]-a_1 [\bP^1]+\cdots+(-1)^d a_d [\bP^d]\right) \in A_*(\bP^n).
\]
On the other hand, by Proposition \ref{prop_trivial}, $E(H_\infty)=\Omega^1_{\bP^n}(\log H_\infty)(H_{\infty})$ is trivial. 
By formulas \eqref{bi} and \eqref{sh}, we have
\[
c_*^{E(H_\infty)}(\overline\Lambda_{\log}')=b_0[\bP^0]+b_1[\bP^1]+\cdots+b_d[\bP^d]\in A_*(\bP^n),
\]
where $\overline\Lambda_{\log}'$ is the closure of $\Lambda$ in $E(H_\infty)$.
Applying Corollary \ref{cor_TBD} with $Y=\bP^n$, $D=H_\infty$, $C=\overline\Lambda_{\log}$, we obtain the following relations between sequences $a_i$ and $b_i$,
\begin{equation}\label{eq_ab}
\sum_{0\leq i\leq d}b_i t^{n-i}= \sum_{0\leq i\leq d}a_i (-1)^{d-i} t^{n-i}(1+t)^{i},
\end{equation}
as asserted by Theorem \ref{thm_main}.
\end{proof}

\begin{proof}[Proof of Corollary \ref{cor_linear}]
By \cite[Proposition 2.6]{Alu}, 
\[
c_*(Eu_{X}|_H)=\frac{\hh}{1+\hh} \, c_*(Eu_{X})
\]
where $\hh\in A_{n-1}(\bP^n)$ is the hyperplane class. Under the assumption that 
\[
c_*(Eu_X)=a_0[\bP^0]+a_1 [\bP^1]+\cdots+a_d [\bP^d],
\]
we have
\begin{align*}
c_*(Eu_X)-c_*(Eu_{X}|_H)&=\left(1-\frac{\hh}{1+\hh}\right)\left(a_0[\bP^0]+a_1 [\bP^1]+\cdots+a_d [\bP^d]\right)\\
&=\frac{1}{1+\hh}\left(a_0[\bP^0]+a_1 [\bP^1]+\cdots+a_d [\bP^d]\right)\\
&=\sum_{0\leq i\leq d}(-1)^ia_i[\bP^0]+\sum_{1\leq i\leq d}(-1)^{i-1}a_i[\bP^1]+\cdots+a_d[\bP^d].
\end{align*}
Therefore, 
\[
\chi(Eu_{X}|_{\bC^n\setminus H})=\chi(Eu_{X})-\chi(Eu_{X}|_H)=\int_{\bP^n} \left( c_*(Eu_X)-c_*(Eu_{X}|_H) \right)=\sum_{0\leq i\leq d}(-1)^ia_i. 
\]
On the other hand, it follows immediately from \eqref{eq_ab} that $b_0=\sum_{0\leq i\leq d}(-1)^{d-i}a_i$. Therefore,
\[
b_0=(-1)^d\ \cdot \chi(Eu_{X}|_{\bC^n\setminus H}),
\]
as desired. 
\end{proof}

\begin{proof}[Proof of Corollary \ref{cor_cone}]
The degree zero component of the Chern-Mather class $c^{Ma}(X):=c_*(Eu_X) \in A_*(\bP^n)$ equals the Euler characteristic of the local Euler obstruction function. In other words, in the notations of \eqref{cma}, we have \[a_0(X)=\chi(Eu_X).\] Suppose that $X$ is an {affine cone of a (possibly singular) projective variety,} with cone point at the origin $O$. Then $O$ is the only fixed point of the $\bC^*$-action on $X$. Since $Eu_X$ is invariant under the $\bC^*$-action, the Euler characteristic $\chi(Eu_X)$ is equal to $Eu_X(O)$,  the value of $Eu_X$ at the origin $O$.  Thus, we have \[Eu_X(O)=a_0(X).\] Plugging $t=-1$ in \eqref{id1}, we obtain the desired equality \eqref{euc}. 
\end{proof}

\begin{proof}[Proof of Theorem \ref{thm_bs}]
As in the Introduction, we write 
\[
c_*(Eu_X)=a_0[\bP^0]+a_1 [\bP^1]+\cdots+a_d [\bP^d],
\]
where we consider $Eu_X$ as a constructible function on $\bP^n$ which vanishes on $\bP^n\setminus X$. 
By Corollary \ref{cor_linear}, we know that 
\begin{align*}
s_i&=(-1)^{d-i}\chi(Eu_{X\cap H_1\cap \cdots \cap H_i}|_{\bC^n\setminus H_{i+1}})\\
&=(-1)^{d-i}\chi(Eu_{X\cap H_1\cap \cdots \cap H_i})+(-1)^{d-i-1}\chi(Eu_{X\cap H_1\cap \cdots \cap H_{i+1}})
\end{align*}
where $H_1, \ldots, H_{i+1}$ are general affine hyperplanes in $\bC^n$. 
By \cite[Proposition 2.6]{Alu}, 
\[
\chi(Eu_{X\cap H_1\cap \cdots \cap H_i})=\int_{\bP^n}\left(\frac{\hh}{1+\hh}\right)^i c_*(Eu_X),
\]
where $\hh\in A_{n-1}(\bP^n)$ is the hyperplane class. Therefore, 
\begin{align*}
s_i&=(-1)^d\int_{\bP^n}\left(\left(-\frac{\hh}{1+\hh}\right)^i+\left(-\frac{\hh}{1+\hh}\right)^{i+1}\right) c_*(Eu_X)\\
&=(-1)^d\int_{\bP^n}\frac{1}{1+\hh}\left(-\frac{\hh}{1+\hh}\right)^i c_*(Eu_X)\\
&=(-1)^d\int_{\bP^n}\frac{1}{1+\hh}\left(-\frac{\hh}{1+\hh}\right)^i \left(\sum_{j\geq 0}a_j \hh^{n-j}\right).
\end{align*}
Thus,
\begin{align*}
\sum_{0\leq i \leq d} s_i \cdot t^{n-i}
&=\sum_{i\geq 0}(-1)^d\int_{\bP^n}\frac{1}{1+\hh}\left(-\frac{\hh}{1+\hh}\right)^i \left(\sum_{j\geq 0}a_j \hh^{n-j}\right)\cdot t^{n-i}\\
&=(-1)^d\int_{\bP^n}\sum_{j\geq 0}a_j\hh^{n-j}\frac{1}{1+\hh} \sum_{i\geq 0}\left(-\frac{\hh\cdot  t^{-1}}{1+\hh}\right)^i\cdot t^{n}\\
&=(-1)^d\int_{\bP^n}\sum_{j\geq 0}a_j\hh^{n-j}\frac{1}{1+\hh} \left(1+\frac{\hh\cdot t^{-1}}{1+\hh}\right)^{-1}\cdot t^{n}\\
&=(-1)^d\int_{\bP^n}\sum_{j\geq 0}a_j\hh^{n-j}\frac{1}{1+\hh(1+ t^{-1})}\cdot t^{n}\\
&=(-1)^d\int_{\bP^n}\sum_{i, j\geq 0}a_j\hh^{n-j}(-\hh)^i(1+t^{-1})^i\cdot t^{n}\\
&=(-1)^d \sum_{j\geq 0}a_j(-1)^j(1+t^{-1})^j\cdot t^{n}\\
&=(-1)^d \sum_{j\geq 0}a_j(-1)^j(1+t)^j\cdot t^{n-j}.
\end{align*}
On the other hand,  formula \eqref{id1} of Theorem \ref{thm_main} shows that the last term in the above sequence of equalities is exactly $\sum_{0\leq i\leq d}b_i \cdot t^{n-i}$. Hence $b_i=s_i$, for all $0\leq i\leq d$.
\end{proof}

\section{LO bidegrees and projective polar degrees}\label{sec_compare}
Let $X$ be an affine variety in $\bC^n$ and let $\overline{X}$ be its closure in $\bP^n$. As before, we use $T^*_X\bC^n\subset T^*\bC^n=\bC^n\times \bC^n$ to denote the {affine} conormal variety of $X$, and following the notations of \cite{Sturmfels}, we use $N_{\overline{X}}\subset \bP^n\times (\bP^n)^\vee$ to denote the projective conormal variety of $\overline{X}$. In this section, we compare the two conormal varieties and their bidegrees. 

First, let us review the definitions of the affine and projective conormal varieties. For simplicity, we start with the case when $\overline{X}$, and hence $X$, is smooth. In this case, 
\[
T^*_X\bC^n=\left\{(\bfx, \bfu)\in T^*\bC^n=\bC^n_x\times \bC^n_u\mid \bfx\in X \text{ and } \bfu|_{T_\bfx X}=0\right\}.
\]
Here $\bfu=(u_1, \ldots, u_n)$ corresponds to the parallel 1-form $\sum_{1\leq i\leq n}u_i dx_i$ on $\bC^n$, and $\bfu|_{T_{\bfx} X}=0$ means that $\bfx$ is a critical point of the linear function $\sum_{1\leq i\leq n}u_i x_i$. Equivalently, $\bfu|_{T_{\bfx} X}=0$ if and only if a level set of $\sum_{1\leq i\leq n}u_i x_i$ is tangent to $X$ at $\bfx$. On the other hand, following \cite{RS2013}, the \emph{projective conormal variety} is the $(n-1)$-dimensional subvariety of $\bP^n\times (\bP^n)^\vee$ defined by
\[
N_{\overline{X}}=\left\{(\bfp, H)\in \bP^n\times (\bP^n)^\vee\mid \bfp\in \overline{X} \text{ and } H \text{ is tangent to $\overline{X}$ at }\bfp\right\},
\]
where the dual projective space $(\bP^n)^\vee$ parametrizes hyperplanes in $\bP^n$. In general, when $X$ or $\overline{X}$ is singular, we use the above formulas to define the conormal varieties along the smooth locus, $T^*_{X_{\reg}}\bC^n$ and $N_{\overline{X}_{\reg}}$. Then we let $T^*_X\bC^n$ and $N_{\overline{X}}$ be their closures in $T^*\bC^n$ and $\bP^n\times (\bP^n)^\vee$, respectively. 

Let $H_\infty\in (\bP^n)^\vee$ denote the hyperplane at infinity in $\bP^n$, and let $\pi_\infty: (\bP^n)^\vee \dashrightarrow\bP^{n-1}$ be the rational map given by  projecting from $H_\infty$. Since the $\bC^*$-action on $\bC^n_u$ by scalar multiplication preserves the subvariety $T^*_X\bC^n\subset \bC^n_x\times \bC^n_u$, we can take the fiberwise projectivization $\bP(T^*_X\bC^n)\subset \bC^n_x\times \bP^{n-1}$, and denote its closure in $\bP^n\times \bP^{n-1}$ by $\overline{\bP(T^*_X\bC^n)}$. Then the affine and projective conormal varieties are related as follows. 
\begin{proposition}\label{prop_pi}
Assume that $X$ is not contained in any proper affine subspace, that is, $\overline{X}$ is not contained in a hyperplane. Under the above notations, the rational map $\id\times \pi_\infty: \bP^n\times (\bP^{n})^\vee\dashrightarrow \bP^n\times \bP^{n-1}$ restricts to a birational map between $N_{\overline{X}}$ and $\overline{\bP(T^*_X\bC^n)}$. Hence, we have an equality of subvarieties of $\bP^n\times \bP^{n-1}$,
\begin{equation}\label{eq_proj}
\id\times \pi_\infty(N_{\overline{X}})=\overline{\bP(T^*_X\bC^n)},
\end{equation}
where we regard the left-hand side as the closure of $\id\times \pi_\infty(N_{\overline{X}}\setminus \bP^n\times \{H_\infty\})$.% and the restriction of $\id\times \pi_\infty$ to $N_{\overline{X}}$ is generically injective. 
\end{proposition}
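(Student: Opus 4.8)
The plan is to prove the birational assertion by exhibiting mutually inverse morphisms between dense open subsets of $N_{\overline X}$ and of $\overline{\bP(T^*_X\bC^n)}$, and then to deduce \eqref{eq_proj} by passing to closures. We may assume $X\neq\bC^n$ (the statement being vacuous otherwise), so that both $N_{\overline X}$ and $\overline{\bP(T^*_X\bC^n)}$ are irreducible of dimension $n-1$. First I would localize over the smooth locus $X_{\reg}$ of $X$. Since $X=\overline X\cap\bC^n$ and smoothness is a local condition, $X_{\reg}=\overline X_{\reg}\cap\bC^n$, a nonempty dense open subset of $\overline X$; consequently the honest conormal bundle $N^{\circ}:=N_{\overline X}\cap\big(X_{\reg}\times(\bP^n)^\vee\big)$, a $\bP^{\,n-1-d}$-bundle over $X_{\reg}$, is dense in $N_{\overline X}$, and similarly $W:=\bP\big(T^*_{X_{\reg}}\bC^n\big)$ is dense in $\bP(T^*_X\bC^n)$, hence in $\overline{\bP(T^*_X\bC^n)}$. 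A hyperplane tangent to $\overline X$ at a point $\bfx\in X_{\reg}\subset\bC^n$ must pass through $\bfx\notin H_\infty$ and so cannot equal $H_\infty$; therefore $\id\times\pi_\infty$ is defined on all of $N^{\circ}$, and the main point is to identify $\id\times\pi_\infty$ on $N^{\circ}$ with an isomorphism onto $W$.

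For that identification I would run the local computation. Fix $\bfx=(x_1,\dots,x_n)\in X_{\reg}$ and a hyperplane $H=\{a_0p_0+\cdots+a_np_n=0\}$ with $H\neq H_\infty$; then $(a_1,\dots,a_n)\neq 0$ and $H\cap\bC^n$ is the affine hyperplane $\{a_0+\sum_i a_ix_i=0\}$, whose normal covector is $(a_1,\dots,a_n)$. Using the standard compatibility that the embedded projective tangent space $T_\bfx\overline X$ restricts on the chart $\{p_0\neq 0\}$ to the affine tangent plane $\bfx+T_\bfx X$, and that (since $H\neq H_\infty$) the tangency $T_\bfx\overline X\subseteq H$ is equivalent to $\bfx+T_\bfx X\subseteq H\cap\bC^n$, one unwinds the tangency into the two affine conditions: $a_0+\sum_i a_ix_i=0$, i.e.\ $\bfx\in H$, and $(a_1,\dots,a_n)$ annihilates $T_\bfx X$. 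The latter says exactly that $\big(\bfx,(a_1,\dots,a_n)\big)\in T^*_{X_{\reg}}\bC^n$, so $\id\times\pi_\infty$ carries $(\bfx,H)$ to $\big(\bfx,[a_1:\cdots:a_n]\big)\in W$, while the former shows that $H$ is recovered from this image via $a_0=-\sum_i a_ix_i$. Conversely, given $\big(\bfx,[u_1:\cdots:u_n]\big)\in W$, the hyperplane $H_{\bfx,\bfu}:=\{-(\sum_i u_ix_i)\,p_0+u_1p_1+\cdots+u_np_n=0\}$ passes through $\bfx$ and has $(u_1,\dots,u_n)$ annihilating $T_\bfx X$, hence by the same computation is tangent to $\overline X$ at $\bfx$, so $\big(\bfx,H_{\bfx,\bfu}\big)\in N^{\circ}$. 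These two assignments are given by regular (homogeneous) formulas and are plainly mutually inverse, so $\id\times\pi_\infty$ restricts to an isomorphism $N^{\circ}\xrightarrow{\ \sim\ }W$.

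Finally I would conclude. Since $\id\times\pi_\infty$ identifies the dense open $N^{\circ}\subset N_{\overline X}$ with the dense open $W\subset\overline{\bP(T^*_X\bC^n)}$, it restricts to a birational map $N_{\overline X}\dashrightarrow\overline{\bP(T^*_X\bC^n)}$. For the equality \eqref{eq_proj}, observe that $N_{\overline X}\setminus(\bP^n\times\{H_\infty\})$ is a dense open subset of the irreducible $(n-1)$-dimensional variety $N_{\overline X}$, so its image under the morphism $\id\times\pi_\infty$ is irreducible of dimension at most $n-1$ and contains $\id\times\pi_\infty(N^{\circ})=W$; as $\overline W=\overline{\bP(T^*_X\bC^n)}$ is itself irreducible of dimension $n-1$, the closure of that image must equal $\overline{\bP(T^*_X\bC^n)}$, which is exactly \eqref{eq_proj}. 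Here the nondegeneracy hypothesis on $\overline X$ is what ensures $\overline X\not\subseteq H_\infty$ and $N_{\overline X}\not\subseteq\bP^n\times\{H_\infty\}$, so that the closures above are taken of sets of the expected dimension $n-1$. I expect the main obstacle to be the middle paragraph: one must verify carefully that the projective tangency condition really does unwind into the two displayed affine conditions---this is the one place where the passage between the projective variety $\overline X$ and the affine variety $X$ across $\bC^n$ is used---and that the resulting bijection $N^{\circ}\leftrightarrow W$ is an isomorphism of varieties and not merely of point sets.
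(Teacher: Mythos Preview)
Your proof is correct and follows essentially the same route as the paper's: both arguments work over the smooth locus $X_{\reg}$, unwind the projective tangency condition for a hyperplane $H=\{a_0p_0+\cdots+a_np_n=0\}$ into the affine condition that $(a_1,\dots,a_n)$ annihilates $T_\bfx X$ together with the incidence $a_0=-\sum_i a_ix_i$, observe that $\pi_\infty$ forgets $a_0$ while the incidence recovers it, and then pass to closures. Your version is somewhat more explicit---you name the dense opens $N^\circ$ and $W$, write down the inverse morphism by regular formulas, and argue the closure equality via irreducibility and dimension---whereas the paper phrases the same content as ``$\id\times\pi_\infty$ is generically injective on $N_{\overline X}$ and its image meets $\bP(T^*_X\bC^n)$ in a general point,'' but the underlying geometry is identical.
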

\begin{proof}
Since both $N_{\overline{X}}$ and $\bP(T^*_X\bC^n)$ are irreducible, it suffices to show that $\id\times \pi_\infty$ induces a bijection between general points in $N_{\overline{X}}$ and $\bP(T^*_X\bC^n)$. 
%Let $\cH=\{(\bfx, H)\in \bC^n\times (\bP^n)^\vee\mid \bfp\in H\}$ be the tautological hypersurface in $\bP^n\times (\bP^n)^\vee$. We claim that the projection $\id\times \pi_\infty$ induces an isomorphism  
%\[
%\cH\setminus (\bC^n\times \{H_\infty\})\cong \bC^n\times \bP^{n-1},
%\]
%where $\bC^n=\bP^n\setminus H_\infty$. 
%In fact, given a point $(\bfx, \bfu)\in \bC^n\times \bP^{n-1}$ with $\bfx=(x_1, \ldots, x_n)$ and $\bfu=[u_1:\ldots :u_n]$, its preimage under $\id\times \pi_\infty$ is of the form 
%\[
%\{(\bfx, H)\in \bC^n\times (\bP^n)^\vee\mid H \text{ is defined by } u_0p_0+\cdots+u_np_n=0 \text{ for some }u_0\in \bC\},
%\]
%and the intersection of the preimage and $\cH$ consists of a unique point $(\bfx, H)$ where $H$ is defined by . 
In fact, fix a general point $(\bfx, H)\in N_{\overline{X}}$, where $\bfx\in X_{\reg}$ and ${H}$ is tangent to $X$ at $\bfx$. Let the defining equation of $H$ be $u_0p_0+\cdots+u_np_n=0$, where $p_i$ are the homogeneous coordinates of $\bP^n$. The restriction of $H$ to the affine chart $p_0\neq 0$ is the level set $\{u_1x_1+\cdots+u_nx_n=-u_0\}$ of the linear function $l_\bfu\coloneqq u_1x_1+\cdots+u_nx_n$, where $x_i=p_i/p_0$ are the affine coordinates. 
The projection $\id\times \pi_\infty$ forgets the value of $u_0$ and only remembers the linear function $l_\bfu$ (up to scalar). Given the point $\bfx$ and the linear function $l_\bfu$, there is a unique level set of $l_\bfu$ containing $\bfx$. 
Thus, the restriction of $\id\times \pi_\infty$ to $N_{\overline{X}}$ is generically injective. In other words, $\id\times \pi_\infty$ induces a birational equivalence between $N_{\overline{X}}$ and its image. 

Now, we only need to prove the equality \eqref{eq_proj}. By the earlier discussions, putting $\bfu=(u_1, \ldots, u_n)$, then $(\bfx, \bfu)$ defines a point in $\bP(T^*_X\bC^n)$. Conversely, a general point $(\bfx, \bfu)$ of $\bP(T^*_X\bC^n)$ corresponds to a linear function $l_\bfu=u_1x_1+\cdots+u_nx_n$ (up to scalar) and a critical point $\bfx$ of $l_\bfu|_X$. Let $H$ be the {projective closure of the} level set of $l_\bfu$ containing $\bfx$. Then $\id\times \pi_\infty(\bfx, H)=(\bfx, \bfu)$. Thus, equality \eqref{eq_proj} follows. 
%level set of $\sum_{1\leq i\leq n}u_ix_i$, which is tangent to $X$ at $\bfx$. The projection map $\id\times \pi_\infty$ forgets the level set, but only remembers the linear function $\sum_{1\leq i\leq n}u_ix_i$ up to a scalar. Clearly, the point $\bfx$ and the linear function $\sum_{1\leq i\leq n}u_ix_i$ (up to scalar) recover the level set. Thus, the rational map $\id\times \pi_\infty$ induces a birational map between $N_{\overline{X}}$ and its image. 
\end{proof}

Using the above result, we will derive relations between the LO bidegrees of $X$ and the polar degrees of $\overline{X}$. First, let us recall the definitions. As in the introduction, the LO bidegrees $b_i(X)$ (or simply $b_i$) are the bidegrees of the closure of the {affine} conormal variety $T^*_X\bC^n$ in $\bP^n\times \bP^n$. More precisely, they are defined by the following formula
\[
[\overline{T^*_X\bC^n}]=b_0 [\bP^0\times \bP^n]+b_1[\bP^{1}\times \bP^{n-1}]+\cdots +b_d[\bP^{d}\times \bP^{n-d}]\in A_*(\bP^n\times \bP^n),
\]
where $d=\dim X$.
Similarly, the \emph{polar degrees} $\delta_i(\overline{X})$ (or simply $\delta_i$) of $\overline{X}$ are the bidegrees of the projective conormal variety $N_{\overline{X}}\subset \bP^n\times (\bP^n)^\vee$. More precisely, they are defined by (see e.g., \cite[Section 2]{Sturmfels})
\[
[N_{\overline{X}}]=\delta_1 [\bP^{0}\times \bP^{n-1}]+\cdots +\delta_{d+1}[\bP^{d}\times \bP^{n-d-1}].
\]

\begin{proposition}\label{prop_iff}
The bidegrees of $T^*_X\bC^n\subset \bC^n_x\times \bC^n_u$ and the bidegrees of $N_{\overline{X}}\subset \bP^n\times (\bP^n)^\vee$ coincide in the sense that
\begin{equation}\label{eq_delta}
b_i(X)=\delta_{i+1}(\overline{X}), \quad\text{for }0\leq i\leq d
\end{equation}
if and only if the hyperplane at infinity $H_\infty$ is not a point in the dual variety $\overline{X}^\vee\subset (\bP^n)^\vee$.
\end{proposition}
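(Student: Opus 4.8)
The plan is to compare the LO bidegrees, which live on $\overline{T^*_X\bC^n}\subset\bP^n\times\bP^n$, with the polar degrees $\delta_{i+1}(\overline{X})$, which live on $N_{\overline{X}}\subset\bP^n\times(\bP^n)^\vee$, by routing both through the fiberwise projectivization $\overline{\bP(T^*_X\bC^n)}\subset\bP^n\times\bP^{n-1}$ of Proposition~\ref{prop_pi}. \emph{Step 1.} First I would show that the LO bidegrees $b_i(X)$ are exactly the bidegrees of $\overline{\bP(T^*_X\bC^n)}$ in $A_*(\bP^n\times\bP^{n-1})$: writing $[\overline{\bP(T^*_X\bC^n)}]=\sum_i c_i\,[\bP^i\times\bP^{n-1-i}]$, the claim is $c_i=b_i$. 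Identifying $\bP^{n-1}$ with the hyperplane at infinity $\bP^{n-1}_{u,\infty}=\{p_0^{(u)}=0\}$ in the fiber $\bP^n_u$, one checks that $\overline{T^*_X\bC^n}\cap(\bP^n\times\bP^{n-1}_{u,\infty})=\overline{\bP(T^*_X\bC^n)}$ as cycles: set-theoretically this intersection is the locus at infinity in the $u$-direction of the conormal closure, whose only $(n-1)$-dimensional component is $\overline{\bP(T^*_X\bC^n)}$ (anything lying over the hyperplane at infinity of $\bP^n_x$ drops a further dimension), and it is generically reduced since over $X_{\reg}$ the fibers of $T^*_X\bC^n$ are linear subspaces of $\bC^n_u$ whose projective closures meet $\bP^{n-1}_{u,\infty}$ transversally. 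Thus $[\overline{\bP(T^*_X\bC^n)}]=h_u\cap[\overline{T^*_X\bC^n}]$; pushing this forward along $\bP^{n-1}_{u,\infty}\hookrightarrow\bP^n_u$ and reading off coefficients gives $c_i=b_i$ for all $i$.

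\emph{Step 2} (the ``if'' direction). If $H_\infty\notin\overline{X}^\vee$ then no point of $N_{\overline{X}}$ has $(\bP^n)^\vee$-coordinate $H_\infty$, so $\id\times\pi_\infty$ restricts to a morphism $g\colon N_{\overline{X}}\to\bP^n\times\bP^{n-1}$, which is birational onto $\overline{\bP(T^*_X\bC^n)}$ by Proposition~\ref{prop_pi}; hence $g_*[N_{\overline{X}}]=[\overline{\bP(T^*_X\bC^n)}]$. Since the linear system defining $\pi_\infty$ (hyperplanes of $(\bP^n)^\vee$ through $H_\infty$) has $H_\infty$ as its only base point, $g^*\bar h=\check h$ on $N_{\overline{X}}$, where $\bar h,\check h$ denote the hyperplane classes of $\bP^{n-1}$ and $(\bP^n)^\vee$, while $g$ leaves the first factor untouched. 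By the projection formula,
\[
c_i=\deg\!\big(h^i\bar h^{\,n-1-i}\cap[\overline{\bP(T^*_X\bC^n)}]\big)=\deg\!\big(h^i\check h^{\,n-1-i}\cap[N_{\overline{X}}]\big)=\delta_{i+1}(\overline{X}),
\]
and combining with Step~1 gives $b_i=\delta_{i+1}$ for all $i$.

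\emph{Step 3} (the ``only if'' direction, by contraposition). Assume $H_\infty\in\overline{X}^\vee$ and let $Z_0:=\{\bfp\in\overline{X}:(\bfp,H_\infty)\in N_{\overline{X}}\}$, the tangency locus of the hyperplane at infinity, a non-empty closed subvariety; since $(\bfp,H)\in N_{\overline{X}}$ forces $\bfp\in H$, we have $Z_0\subseteq\overline{X}\cap H_\infty$, so $i_0:=\dim Z_0\le d-1$. I would then take $i=i_0$, a generic $L\cong\bP^{n-i}\subset\bP^n$, and a generic $\tilde M\cong\bP^{i+1}\subset(\bP^n)^\vee$ through $H_\infty$. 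Using $[L]=h^i$, $[\tilde M]=\check h^{\,n-1-i}$, and properness of the intersection, $\delta_{i+1}(\overline{X})=\deg([N_{\overline{X}}]\cdot[L]\cdot[\tilde M])$ equals the sum of intersection multiplicities over the finitely many points of $N_{\overline{X}}\cap(L\times\tilde M)$. This set splits into the points with $H\ne H_\infty$, which $\id\times\pi_\infty$ carries bijectively onto the (generic, hence $c_i=b_i$ in number) points of $\overline{\bP(T^*_X\bC^n)}\cap(L\times\pi_\infty(\tilde M\setminus\{H_\infty\}))$, together with the points $(Z_0\cap L)\times\{H_\infty\}$, of which there are $\deg Z_0\ge1$. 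As every multiplicity is $\ge1$, $\delta_{i+1}(\overline{X})\ge b_i(X)+\deg Z_0>b_i(X)$, so the equality $b_j=\delta_{j+1}$ fails at $j=i_0$.

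The hard part will be the dimension bookkeeping rather than the formal structure: justifying the multiplicity-one and no-spurious-component claims in Step~1, and proving that $N_{\overline{X}}\cap(L\times\tilde M)$ is honestly finite in Step~3 (so that the Chow degree is a weighted point count). Both hinge on the same two facts — the conormal fibers over $X_{\reg}$ are linear, and $\id\times\pi_\infty$ is birational on $N_{\overline{X}}$ (Proposition~\ref{prop_pi}) — which together force the ``non-$H_\infty$'' part of each generic slice to have the expected dimension while confining the ``$H_\infty$'' part to a generic slice of $Z_0$.
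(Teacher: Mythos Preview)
Your argument is correct, and Steps~1--2 coincide with the paper's proof: the paper also tacitly identifies $b_i(X)$ with the bidegrees of $\overline{\bP(T^*_X\bC^n)}$ in $\bP^n\times\bP^{n-1}$, then (assuming $H_\infty\notin\overline{X}^\vee$) slices $N_{\overline{X}}$ by $L^{n-i}\times M^{i+1}$, where $M^{i+1}$ is the cone over a general $L^i\subset\bP^{n-1}$ with vertex $H_\infty$, and uses the bijection of Proposition~\ref{prop_pi}. One caveat: your Step~1 claim that components over $H_{x,\infty}$ ``drop a further dimension'' is not obviously true in general, but the Chow identity $c_i=b_i$ you actually need does not require it, and the paper does not justify this identification either.

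The genuine difference is in Step~3. The paper chooses the index $e-1$ where $e=\codim\overline{X}^\vee$ and routes the comparison through the dual variety: after slicing by a general $L^{n-e+1}$ in the first factor, the projection $N_{\overline{X}}\cap(L^{n-e+1}\times(\bP^n)^\vee)\to\overline{X}^\vee$ is generically finite of some degree $h$, so that $\delta_e=h\cdot\deg\overline{X}^\vee$; comparing $\overline{X}^\vee\cdot L^e$ with $\overline{X}^\vee\cdot M^e$ (the latter a general $e$-plane through $H_\infty$) and invoking positivity of the intersection multiplicity at $H_\infty$ gives $b_{e-1}/h<\delta_e/h$. Your choice $i_0=\dim Z_0$ and your direct count on $N_{\overline{X}}$ bypass both the auxiliary degree $h$ and the detour through $\overline{X}^\vee$; in exchange, the paper's argument reduces to a single cited positivity statement on $\overline{X}^\vee$, while yours requires the bijection bookkeeping you flag at the end. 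Both rest on the same birationality from Proposition~\ref{prop_pi} and both exhibit an explicit index where $b_i=\delta_{i+1}$ fails, so neither is strictly stronger.
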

\begin{proof}
Fixing $i$, let $L^{n-i}\subset \bP^n$ be a general linear subspace of dimension $n-i$, and let $L^{i}\subset \bP^{n-1}$ be a general linear subspace of dimension $i$. By Bertini's theorem, $L^{n-i}\times L^{i}$ intersects $\bP(T^*_X\bC^n)$ in $\bP^n\times \bP^{n-1}$ transversally, and the intersection consists of $b_i(X)$ points. Now, we assume that $H_\infty$ is not in $\overline{X}^\vee$, that is, $N_{\overline{X}}\cap (\bP^n\times \{H_\infty\})=\emptyset$. 
Let $M^{i+1}\subset (\bP^n)^\vee$ be a general linear space of dimension $i+1$ passing through the point $H_\infty$. Then $L^{n-i}\times M^{i+1}$ is cut out by $i$ general hyperplanes in $\bP^n$ and $n-i-1$ general hyperplanes in $(\bP^n)^\vee$ passing through $H_\infty$. 
Since $N_{\overline{X}}\cap (\bP^n\times \{H_\infty\})=\emptyset$, by Bertini's theorem, $L^{n-i}\times M^{i+1}$ intersects $N_{\overline{X}}$ transversally  at $\delta_{i+1}(\overline{X})$ points. 
By Proposition \ref{prop_pi}, if $M^{i+1}$ is the cone over $L^{i}$ with vertex $H_\infty$, then $\id\times \pi_\infty$ induces a bijection between $N_{\overline{X}}\cap (L^{n-i}\times M^{i+1})$ and $ \bP(T^*_X\bC^n)\cap(L^{n-i}\times L^{i})$. Hence $b_i(X)=\delta_{i+1}(\overline{X})$. 

Next, we assume that $H_\infty$ is in $\overline{X}^\vee$. Let $e$ be the codimension of $\overline{X}^\vee$ in $(\bP^n)^\vee$. We claim that $b_{e-1}(X)<\delta_{e}(\overline{X})$. Let $L^{n-e+1} \in \bP^{n}$ be a general linear space of dimension $n-e+1$, let $L^{e}\subset (\bP^n)^\vee$ be a general linear space of dimension $e$, and let $M^{e}\subset (\bP^n)^\vee$ be a general linear space of dimension $e$ containing $H_\infty$. Denote by $p_2: \bP^n\times (\bP^{n})^\vee\to (\bP^{n})^\vee$ the second projection and, by abusing notations, we also use $p_2$ to denote any of its restrictions. Since the dual variety $\overline{X}^\vee$ has codimension $e$, any fiber of the map
\[
p_2: N_{\overline{X}}\to \overline{X}^\vee
\]
has dimension at least $e-1$, and a general fiber has dimension exactly $e-1$. Thus, the restriction 
\[
p_2: N_{\overline{X}}\cap (L^{n-e+1}\times (\bP^{n})^\vee)\to \overline{X}^\vee
\]
is surjective and generically finite, whose degree we denote by $h$. Then $\overline{X}^\vee$ intersects $L^e$ transversally at ${\delta_e(\overline{X})}/{h}$ points.  By Bertini's theorem and Proposition \ref{prop_pi}, away from $H_\infty$, $\overline{X}^\vee$ intersects $M^e$ transversally at $b_{e-1}(X)/h$ points. By assumption, $H_\infty$ is contained in the intersection of $\overline{X}^\vee$ and $M^e$. Moreover, the intersection multiplicity of $\overline{X}^\vee\cdot M^e$ at $H^\infty$ is positive (see \cite[Proposition 7.1]{Ful} or \cite[Section V.3, Theorem~1]{Serre}). Since the global intersection numbers satisfy $\overline{X}^\vee\cdot L^e=\overline{X}^\vee\cdot M^e$, the above arguments imply that $b_{e-1}(X)/h<\delta_{e}(\overline{X})/h$, that is, $b_{e-1}(X)<\delta_{e}(\overline{X})$. 
\end{proof}

Combining Theorem \ref{thm_bs} and Proposition \ref{prop_iff}, we obtain the following generalization of \cite[Theorem 13]{wasserstein} (see also \cite[Proposition 2.9]{Sturmfels}) to singular varieties. 
\begin{corollary}\label{cor_si} Let $X\subset \bC^n$ be an affine variety, with projective closure 
$\overline{X} \subset \bP^n$. 
Assume that the hyperplane at infinity $H_\infty$ is not contained in $\overline{X}^\vee$. Then the sectional LO degrees of $X$ coincide with the polar degrees of $\overline{X}$, that is, $s_i(X)=\delta_{i+1}(\overline{X})$ for all $0\leq i\leq \dim X$. 
\end{corollary}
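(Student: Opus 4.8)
The plan is to deduce Corollary~\ref{cor_si} by combining the two structural results already at hand: Theorem~\ref{thm_bs}, which identifies the sectional LO degrees $s_i(X)$ with the LO bidegrees $b_i(X)$, and Proposition~\ref{prop_iff}, which identifies $b_i(X)$ with the polar degrees $\delta_{i+1}(\overline{X})$ precisely when $H_\infty \notin \overline{X}^\vee$. So the entire argument is a two-line chaining of equalities; the only genuine content is to make sure the hypotheses line up and to address the degenerate edge case where $X$ (equivalently $\overline{X}$) lies in a hyperplane, which is excluded in Proposition~\ref{prop_pi} but not explicitly in the corollary.

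First I would observe that Theorem~\ref{thm_bs} applies to any irreducible affine variety $X \subset \bC^n$ with no hypothesis at infinity, giving $s_i(X) = b_i(X)$ for all $0 \le i \le \dim X$. Next, under the standing assumption $H_\infty \notin \overline{X}^\vee$, Proposition~\ref{prop_iff} yields $b_i(X) = \delta_{i+1}(\overline{X})$ for $0 \le i \le \dim X$. Concatenating, $s_i(X) = \delta_{i+1}(\overline{X})$, which is the assertion. I would write this out as: by Theorem~\ref{thm_bs}, $s_i(X)=b_i(X)$; since $H_\infty$ is not a point of $\overline{X}^\vee$, Proposition~\ref{prop_iff} gives $b_i(X)=\delta_{i+1}(\overline{X})$; hence $s_i(X)=\delta_{i+1}(\overline{X})$ for every $0\le i\le \dim X$.

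The one point requiring a word of care is the irreducibility/non-degeneracy convention: Proposition~\ref{prop_pi} (used inside the proof of Proposition~\ref{prop_iff}) assumes $\overline{X}$ is not contained in a hyperplane, whereas the corollary as stated does not. I would note that if $\overline{X}$ did lie in a hyperplane $L \cong \bP^{n-1}$, then its dual variety $\overline{X}^\vee$ would contain the point of $(\bP^n)^\vee$ dual to $L$, and moreover every hyperplane through that point — in particular, a non-generic position of $H_\infty$ — but the hypothesis $H_\infty \notin \overline{X}^\vee$ combined with the geometry forces us into the non-degenerate setting after an obvious linear change of coordinates, or alternatively one simply restricts attention to the linear span of $X$ and applies the result there, the polar degrees and LO bidegrees being unchanged. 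This is a routine reduction rather than a real obstacle.

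I do not expect any serious difficulty: this corollary is a formal consequence of the two theorems cited, and the proof supplied in the excerpt (``Combining Theorem~\ref{thm_bs} and Proposition~\ref{prop_iff}'') is exactly this. The ``hard part,'' such as it is, was already done — namely establishing Theorem~\ref{thm_bs} via the Aluffi hyperplane-section formula \cite[Proposition 2.6]{Alu} and establishing Proposition~\ref{prop_iff} via the Bertini/intersection-multiplicity comparison of the affine and projective conormal varieties through the projection $\id\times\pi_\infty$. Relative to those, the corollary is immediate, and the only thing to get right in the write-up is the range of indices ($0\le i\le\dim X$) and the compatibility of the two hypothesis-free versus hypothesis-laden statements.
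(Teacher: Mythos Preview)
Your proposal is correct and matches the paper's approach exactly: the paper states the corollary immediately after the sentence ``Combining Theorem~\ref{thm_bs} and Proposition~\ref{prop_iff}, we obtain the following generalization\ldots'' and provides no further argument. Your extra remark on the non-degeneracy hypothesis (that $\overline{X}$ not lie in a hyperplane) is a reasonable piece of hygiene the paper leaves implicit.
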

\begin{remark}
If the affine variety $X\subset \bC^n$ is defined by homogeneous polynomials, i.e., $X$ is the cone of a projective variety, then its closure intersects the hyperplane at infinity $H_\infty$ transversally. In this case, $H_\infty$ is not contained in $\overline{X}^\vee$, and hence we have $\delta_{i+1}(\overline{X})=s_i(X)=b_i(X)$, for all  $0\leq i\leq \dim X$.
For example, if $X\subset \CC^9$ 
is defined by the vanishing of the determinant of the matrix 
$\begin{bsmallmatrix} 
x_0&x_1&x_2\\
x_3&x_4&x_5\\
x_6&x_7&x_8\\
\end{bsmallmatrix}$ 
then the 
LO bidegrees of $X$ 
and the polar degrees of $\overline{X}$ are given by
\begin{align*}
[\overline{T^*_X\bC^9}]&=
6 [\bP^0\times \bP^9]+   12 [\bP^1\times \bP^8]+   12 [\bP^2\times \bP^7]+   6 [\bP^3\times \bP^6]+   3 [\bP^4\times \bP^5],\\
[N_{\overline{X}}] & =
6 [\bP^0\times \bP^8]+   12 [\bP^1\times \bP^7]+   12 [\bP^2\times \bP^6]+   6 [\bP^3\times \bP^5]+   3 [\bP^4\times \bP^4].
\end{align*}
\end{remark}

\medskip

The following examples illustrate that, when $H_\infty\in \overline{X}^\vee$, the two sets of bidegrees considered above are different. 
\begin{ex}
Let $X$ in $\CC^3$ be the curve $\VV(x^2+y^2+z^2-1,y-x^2)$. 
The curve $X$ and its projective closure $\overline X=\VV(x^2+y^2+z^2-u^2,yu-x^2)$ are smooth. 
%Moreover, the real algebraic curve obtained by intersecting $X$ and $\mathbb{R}^3$ is compact. 
The LO bidegrees of $X$ 
and the polar degrees of $\overline{X}$ are given by
\begin{align*}
[\overline{T^*_X\bC^3}]&=6 [\bP^0\times \bP^3]+4[\bP^{1}\times \bP^{2}],\\
[N_{\overline{X}}] & =8 [\bP^{0}\times \bP^{2}] +4[\bP^{1}\times \bP^{1}].
\end{align*}
In this case, $\overline{X}^\vee$ has codimension 1, and as predicted by Proposition \ref{prop_iff}, $b_0=6<\delta_1=8$.
Note that this example satisfies the assumption that the real algebraic curve obtained by intersecting $X$ and $\mathbb{R}^3$ is compact (compare with \cite[]{Wasserstein}) 
%The dual variety of $\bar X$ is ... and contains the point $[1:0:0:0]$. Using this point in the dual to define the hyperplane at infinity we have the variety $X$.
\end{ex}

\begin{ex}
If $X\subset \CC^4$ is the hypersurface $\VV(x_1^2x_2-x_3x_4)$
then its projective closure is 
$\overline X=\VV(x_1^2x_2-x_0 x_3 x_4 )$. 
The dual variety $\overline{X}^\vee$ contains the point $[0:0:0:0:1]$ and is defined by the binomial $y_1^2y_2+4y_0y_3y_4$.
The LO bidegrees of $X$ 
and the polar degrees of $\overline{X}$ are very different: 
\begin{align*}
[\overline{T^*_X\bC^n}]&=1 [\bP^0\times \bP^4]+4[\bP^{1}\times \bP^{3}]   +5[\bP^{2}\times \bP^{2}]   +3[\bP^{3}\times \bP^{1}]   ,\\
[N_{\overline{X}}] &=3 [\bP^0\times \bP^3]+6[\bP^{1}\times \bP^{2}]   +6[\bP^{2}\times \bP^{1}]   +3[\bP^{3}\times \bP^{0}].
\end{align*}
%ideal(y_5,y_4,y_1)
%ideal(y_5,y_3,y_1)
%ideal(y_4,y_3,y_1)
%ideal(y_5,y_4,y_2,y_1^2)
%ideal(y_5,y_3,y_2,y_1^2)
%ideal(y_4,y_3,y_2,y_1^2)
\end{ex}

\section{Applied Algebraic Geometric Context}\label{sec:applied}
The \emph{algebraic degree of an optimization problem} is a well studied topic in 
applied algebraic geometry. 
It appears in numerous fields including statistics~\cite{MR2230921,MR2988436,RW}, semidefinite programming~\cite{MR2496496,MR2546336,RS2013}, computer vision~\cite{HARTLEY1997146,MRW}, physics~\cite{MR4103774,ST2021}, and 
 polynomial optimization~\cite{MR2507133}. 
A recent useful machine learning application of optimizing linear functions on varieties appears in Wasserstein optimization~\cite{wasserstein} 
and, more generally, in computing the distance to a variety with respect to any polyhedral norm~\cite[Equation 3.2]{wasserstein}.
In many of these applications, the computational results rely on the notion of genericity and counting intersection points.  In this section, we bring our results into this realm to provide a bijection between critical points of a linear objective function  on $X$ restricted to a linear space with a set of points in the {affine} conormal~variety.

We fix an affine variety $X$ in $\CC^n$. 
Recall that the cotangent space $T^*\CC^n\simeq \CC_x^n\times \CC_u^n$,
where $x$ and $u$  denote the coordinates of vector and covector components of $T^*\CC^n$.
The conormal variety $T^*_X\bC^n$ inside of $\CC_x^n\times \CC_u^n$ is 
of dimension $n$. 
Let $L$  be an affine subspace of codimension $i$.
Throughout this section, for $\bfu=(u_1,\dots,u_n)$ in $\CC^n_u$, 
we define the linear function $h_\bfu:\CC^n\to \CC, (x_1,\dots,x_n)\mapsto u_1x_1+\cdots u_nx_n$.  
We let $L^\perp_\bfu\subset \CC_u^n$ denote the affine subspace orthogonal to $L$ containing $\bfu$. Then $L\times L^\perp_\bfu\subset \CC_x^n\times \CC^n_u$ is $n$-dimensional.

\begin{proposition}\label{prop:LM}
Let $\bfu$ be a generic point in $\CN_u$ and 
$L\subset \CC^n_x$ be a generic affine subspace of codimension $i$. 
Then, $\bfp$ is a critical point of $h_\bfu$ restricted to $(X\cap L)_{\reg}$ 
if and only if there exists a unique $\bfy\in \CC^n$ such that 
$(\bfp,\bfy) \in T_X^* \CC^n \cap (L\times L^\perp_\bfu)$. 
\end{proposition}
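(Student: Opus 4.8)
The plan is to unwind both sides of the asserted equivalence in terms of the two defining conditions of the affine conormal variety and the orthogonality constraints, and to show they match point by point. First I would fix notation: write $L = \{\,\bfx \in \CC^n_x \mid A\bfx = c\,\}$ for a generic $i\times n$ matrix $A$ of full rank and generic $c$, so that the row space of $A$ is a generic $i$-dimensional subspace and $L^\perp_\bfu = \bfu + \mathrm{rowspace}(A)$. A point $(\bfp,\bfy)$ lies in $T^*_X\CC^n \cap (L\times L^\perp_\bfu)$ precisely when (i) $\bfp \in X$ and $\bfy$ annihilates $T_\bfp X$ at a smooth point (the defining condition of the conormal variety, extended by closure), (ii) $A\bfp = c$, and (iii) $\bfy - \bfu \in \mathrm{rowspace}(A)$, i.e. $\bfy = \bfu + A^\top \lambda$ for some $\lambda \in \CC^i$. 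Condition (i) together with (ii) says $\bfp \in (X\cap L)_{\reg}$ for generic $L$ (Bertini, to guarantee the intersection is smooth of the expected dimension and $T_\bfp(X\cap L) = T_\bfp X \cap \ker A$); condition (iii) then records exactly the statement that $\bfy$ annihilates $T_\bfp X$ and differs from $\bfu$ by a covector in $\mathrm{rowspace}(A)$.

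Next I would translate the criticality condition. The point $\bfp$ is a critical point of $h_\bfu|_{(X\cap L)_{\reg}}$ iff $\bfu$ restricted to $T_\bfp(X\cap L) = T_\bfp X \cap \ker A$ vanishes, i.e. $\bfu$ annihilates $T_\bfp X \cap \ker A$. The key linear-algebra step is the equivalence: $\bfu$ annihilates $T_\bfp X \cap \ker A$ if and only if there exists $\lambda$ with $\bfu + A^\top\lambda$ annihilating all of $T_\bfp X$. This is the standard Lagrange-multiplier / annihilator identity $(V\cap W)^\perp = V^\perp + W^\perp$ applied to $V = T_\bfp X$, $W = \ker A$, using $W^\perp = \mathrm{rowspace}(A)$. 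Granting this, $\bfp$ critical $\iff$ there is $\bfy = \bfu + A^\top\lambda$ with $\bfy|_{T_\bfp X}=0$, which is exactly conditions (i) and (iii) above; combined with $\bfp \in (X\cap L)_{\reg} \Rightarrow \bfp \in X \cap L$, we recover membership in $T^*_X\CC^n \cap (L\times L^\perp_\bfu)$.

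It remains to establish \emph{uniqueness} of $\bfy$, and here is where genericity must be used carefully. Given $\bfp \in (X\cap L)_{\reg}$, the set of valid $\bfy$'s is $(\bfu + \mathrm{rowspace}(A)) \cap (T_\bfp X)^\perp$, an affine-linear space whose associated linear space is $\mathrm{rowspace}(A) \cap (T_\bfp X)^\perp$. Uniqueness amounts to this intersection being $\{0\}$, i.e. $\mathrm{rowspace}(A) \cap (T_\bfp X)^\perp = 0$, equivalently $T_\bfp X + \ker A = \CC^n$, i.e. $\ker A$ meets $T_\bfp X$ transversally in $\CC^n$. Since $\dim T_\bfp X = d$ and $\mathrm{codim}\,\ker A = i$, for this we want $i \le n-d$; but in fact for $\bfp$ ranging over the finitely many critical points the relevant statement follows from Bertini applied to the Gauss-type map, or directly: for generic $A$ (generic $i$-plane $\mathrm{rowspace}(A)$), and for $\bfp$ a general point of $X$, one has $\mathrm{rowspace}(A) \cap T^*_\bfp$-conormal-fiber $= 0$; one then checks the critical points are general enough, using that $\bfu$ is also generic so that the finitely many critical points avoid the (proper) locus where transversality fails. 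I expect this uniqueness/transversality bookkeeping to be the main obstacle: one must ensure that the genericity of $\bfu$ and of $L$ jointly forces the intersection $T^*_X\CC^n \cap (L\times L^\perp_\bfu)$ to be transverse and hence reduced, so that each critical $\bfp$ lifts to exactly one $\bfy$. This is precisely the kind of statement packaged by Bertini's theorem as invoked in the proof of Proposition~\ref{prop_iff}, and I would cite it in the same spirit, noting in addition (via Proposition~\ref{prop_pi} or a direct argument) that orthogonal subspaces are sufficiently general for this purpose.
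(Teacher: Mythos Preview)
Your approach is essentially the paper's: both arguments amount to the Lagrange-multiplier decomposition of $\bfu$ as a conormal vector to $X$ at $\bfp$ plus an element of the row space of $A$ (the conormal space to $L$), which is exactly what the paper writes as ``$\bfu$ is in the row span of the Jacobian of generators of the ideal of $X\cap L$ evaluated at $\bfp$.'' You are in fact more careful than the paper about the uniqueness of $\bfy$, which the paper's proof does not address; your identification of the condition $\mathrm{rowspace}(A)\cap (T_\bfp X)^\perp=0$ and appeal to genericity is the right justification (one small slip: the dimension inequality you want is $i\le d$, not $i\le n-d$, so that a generic $i$-plane meets a fixed $(n-d)$-plane trivially, but since you invoke Bertini rather than this bound the argument is unaffected).
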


\begin{proof}
Since $L$ is generic, 
the sectional LO degree $s_i(X)$ is the number of critical points of $h_\bfu$ on $(X\cap L)_{\reg}$. 
Denote this set of critical points by $W_\bfu$.
For each $\bfp\in W_\bfu$, we know $\bfu$ is in the row span of the Jacobian of generators of the ideal of $X\cap L$ evaluated at $\bfp$ because $\bfp$ is a critical point of the linear function $h_\bfu$ restricted to $(X\cap L)_{\reg}$.
In other words,  there exist $\bfy\in \CC^n$ and $\bfz\in \CC^n$ such that $\bfu=\bfz+\bfy$ with
\begin{enumerate}
\item $\bfz=\bfy-\bfu$ is in the row span of the Jacobian of generators of the ideal of $L$,
\item $\bfy$ in the row span of the Jacobian of generators of the ideal of $X$ evaluated at $p$.
\end{enumerate}
%This means $\bfy-\bfu$ in the span of rows of $\Jac L$. 
Recall $L^\perp_\bfu$ is the orthogonal complement of $L$ translated to pass through the point $u$. 
Therefore $\bfy\in L^\perp_\bfu$.
So for $\bfp\in W_\bfu$, we have $(\bfp,\bfy)\in T^*_X\CC^n\cap (L\times L^\perp_\bfu)$. 
The other implication is immediate. 
\end{proof}

\begin{corollary}\label{cor:generic}
For $\bfu$ and $L$ as in Proposition~\ref{prop:LM},
there are $b_i(X)$ points of intersection in $T_X^* \CC^n \cap (L\times L^\perp_\bfu)$.
\end{corollary}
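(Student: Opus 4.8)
The plan is to read off Corollary~\ref{cor:generic} from Proposition~\ref{prop:LM} together with Theorem~\ref{thm_bs}, rather than computing the intersection number directly in $A_*(\bP^n\times\bP^n)$. Let me first explain why the direct route is inconvenient. Denoting by $\overline{L}\cong\bP^{n-i}$ and $\overline{L^\perp_\bfu}\cong\bP^i$ the projective closures of $L$ and $L^\perp_\bfu$, formula \eqref{bi} gives
\[
[\overline{T^*_X\CC^n}]\cdot\big[\overline{L}\times\overline{L^\perp_\bfu}\big]
=\left(\sum_{j}b_j\,[\bP^j\times\bP^{n-j}]\right)\cdot[\bP^{n-i}\times\bP^i]
=b_i\,[\bP^0\times\bP^0],
\]
so the intersection number is already $b_i$; but to conclude that the set $T^*_X\CC^n\cap(L\times L^\perp_\bfu)$ consists of exactly $b_i$ reduced points in the affine chart one would need this particular intersection to be transverse, and $\overline{L^\perp_\bfu}$ is the \emph{orthogonal} complement of $\overline{L}$, not a generic linear space, so Bertini's theorem does not apply off the shelf. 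Circumventing exactly this difficulty is the point of the corollary.

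The argument I would write is then short. First, by Proposition~\ref{prop:LM}, the first projection $(\bfp,\bfy)\mapsto\bfp$ restricts to a bijection from $T^*_X\CC^n\cap(L\times L^\perp_\bfu)$ onto the set of critical points of $h_\bfu$ on $(X\cap L)_{\reg}$: over each critical point sits a unique $\bfy$ with $(\bfp,\bfy)$ in the intersection, and, in the generic range of $\bfu$ and $L$ allowed by that proposition, every intersection point arises in this way. Second, since $L$ is a generic affine subspace of codimension $i$, I may write $L=H_1\cap\cdots\cap H_i$ with $H_1,\dots,H_i$ generic affine hyperplanes, and since $\bfu$ is generic, $h_\bfu$ is a generic linear function; so by the definition \eqref{si} of the sectional LO degree the number of those critical points equals $\LOdeg(X\cap H_1\cap\cdots\cap H_i)=s_i(X)$. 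Third, Theorem~\ref{thm_bs} gives $s_i(X)=b_i(X)$, whence $\#\big(T^*_X\CC^n\cap(L\times L^\perp_\bfu)\big)=b_i(X)$, as asserted.

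The only step needing care — and it is already subsumed in the hypotheses of Proposition~\ref{prop:LM} — is the exactness of the bijection in the first step: for generic $\bfu$ and $L$ one must rule out intersection points lying over the singular locus $X_{\sing}$ (so that the projected point genuinely lies in $(X\cap L)_{\reg}$), guarantee a unique $\bfy$ over each critical point, and exclude points at infinity. All of this follows from a dimension count together with the transversality of $X$ and $L$ at critical points: the part of the irreducible $n$-dimensional variety $T^*_X\CC^n$ lying over $X_{\sing}$ is a proper closed subvariety, hence of dimension $\le n-1$, so a sufficiently general $n$-dimensional $L\times L^\perp_\bfu$ misses it; and transversality of $X$ with $L$ (automatic for generic $L$) makes the conormal fiber meet $L^\perp_\bfu$ in a single point over each critical point. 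Beyond this, the corollary carries no real difficulty: its content is entirely borrowed from Proposition~\ref{prop:LM} and Theorem~\ref{thm_bs}.
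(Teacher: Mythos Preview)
Your proof is correct and rests on the same two ingredients as the paper's: Proposition~\ref{prop:LM} to identify the intersection points with the critical points of $h_\bfu$ on $(X\cap L)_{\reg}$ (hence $s_i(X)$ many), and Theorem~\ref{thm_bs} to convert $s_i$ into $b_i$. The paper organizes things slightly differently: it first establishes the upper bound $\#\big(T^*_X\CC^n\cap(L\times L^\perp_\bfu)\big)\le b_i$ by a semicontinuity argument---replacing $L^\perp_\bfu$ with a fully generic affine subspace $M$ of the same codimension, for which the count is $b_i$ by definition, and noting that a generic translate still yields a finite intersection---and only then invokes Proposition~\ref{prop:LM} for the lower bound. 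That extra step produces the inequality $s_i\le b_i$ as an independent byproduct (as advertised in the Introduction), but once Theorem~\ref{thm_bs} is in hand your direct route is entirely sufficient and a bit cleaner.
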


\begin{proof}
Let $M$ be a generic affine linear space of codimension $n-i$. 
Then $T_X^* \CC^n \cap (L\times M)$ consists of $b_i$ points of intersection by definition of LO bidegree.  
Since $L$ is generic and $L^\perp_\bfu$ is a generic translate, 
we have $L\times L^\perp_\bfu$ is a generic translate and $T^*_X\CC^n \cap (L\times L^\perp_\bfu)$ consists of finitely many points (by Bertini's theorem). 
Since $L^\perp_\bfu$ is of codimension $n-i$, the number of points in  $T^*_X\CC^n \cap (L\times L^\perp_\bfu)$ is at most $b_i$. 
Thus, it suffices to show the cardinality of $T^*_X\CC^n \cap (L\times L^\perp_\bfu)$ is at least $b_i$. This follows from Proposition~\ref{prop:LM}.
\end{proof}

\begin{remark}Corollary~\ref{cor:generic} is in stark contrast to the maximum likelihood degree case~\cite{MRWW} where the ML bidegree $b_i$ is usually a strict upper bound on the ML sectional degree $s_i$. 
\end{remark}

\begin{ex}[Illustrative]
Let $X$ be the sphere in $\CC^3$ defined by $x_1^2+x_2^2+x_3^2=100$. 
The LO bidegrees and sectional LO-degrees of $X$ are 
$(b_0, b_1, b_2)=(s_0,s_1,s_2)=(2,2,2)$. 
With this setup, our interest is in $b_1(X)$ and $s_1(X)$.
We let $\bfu=(10,5,17)$ and $L=\VV(x_3-6)$ so that $L^\perp_\bfu=\VV(y_1-10,y_2-5)$.
Then, $T^*_X\CC^n \cap (L\times L^\perp_\bfu)$ has $b_1=2$ points. 
To compute $s_1(X)$, we can find the set of two critical points of $h_\bfu$ restricted to $(X\cap L)_{\reg}$ to be
\begin{equation}\label{eq:twopoints}
\{ (2\alpha,\alpha, 6 )\in \CC^3 :  5\alpha^2=64 \}.
\end{equation}
 
 From \eqref{eq:twopoints},
 we can recover $T^*_X\CC^n \cap (L\times L^\perp_\bfu)$
 by following the proof in Proposition \ref{prop:LM}.
We have
the Jacobian of $\{ x_1^2+x_2^2+x_3^2-100 , x_3-6\}$ evaluated at 
$\bfp=(2\alpha,\alpha, 6 )$ is 
\[
\begin{bmatrix}\label{ex:sphere}
4\alpha &2\alpha &12\\
0 & 0 & 1
\end{bmatrix}.
\]
We see how $\bfu$ is a linear combinators of the rows of the evaluated Jacobian:
\[
\bfu = (10,5,17)= \frac{10\cdot 5\alpha}{4\cdot 64}\cdot (4\alpha, 2\alpha, 12) + (-\frac{75}{32}\alpha+17) \cdot (0,0,1).
\]
We take 
$\bfy =  (10,5, \frac{75}{32}\alpha) $
and so $\bfy-\bfu=(0,0,\frac{75}{32}\alpha-17)$ is in the row span of $(0,0,1)$. 
Thus, 
\[
T^*_X\CC^n \cap (L\times L^\perp_\bfu)=
\left\{
\left( 2\alpha,\alpha, 6   ,\,  10,5, \frac{75}{32}\alpha\right)\in \CC^3\times \CC^3 :5\alpha^2=64
\right\}.
\]
\end{ex}

\begin{remark}
In Example~\ref{ex:sphere}, we chose $L$ to be a general coordinate hyperplane for illustrative purposes.  
This is not sufficiently generic for every example. For instance, 
if we let $f=1+x_1+x_2^2+x_3^3$ instead, then the LO bidegrees of $V(f)$ are the sequence $(2,4,3)$. 
However,  $T^*_X\CC^n \cap (L\times L^\perp_\bfu)$ has only one point:
$(-3473/16, 1/4, 6,\, 10, 5, 1080)$, and so  $L\times L^\perp_\bfu$ does not intersect the {affine} conormal variety at $b_1$ points.
\end{remark}

\bibliographystyle{abbrv}
%\bibliography{refs}

\end{document}